\newcommand{\E}{\mathbb{E}}
\newcommand{\cP}{\mathcal{P}}
\newcommand{\R}{\mathbb{R}}
\newcommand{\frakM}{\mathfrak{M}}
\newcommand{\supp}{\mathrm{supp}\ }
\newcommand{\conv}{\mathrm{conv}}
\newcommand{\sfd}{\mathsf{d}}
\newcommand{\sfW}{\mathsf{W}}
\newcommand{\bxi}{\boldsymbol{\xi}}
\newcommand{\hxi}{\hat{\xi}}
\newcommand{\hSigma}{\hat{\Sigma}}
\newcommand{\bzeta}{\boldsymbol{\zeta}}
\newcommand{\bmu}{\boldsymbol{\mu}}
\newcommand{\bnu}{\boldsymbol{\nu}}
\newcommand{\bgamma}{\boldsymbol{\gamma}}
\newcommand{\bdelta}{\boldsymbol{\delta}}
\newcommand{\bpi}{\boldsymbol{\pi}}
\newcommand{\bC}{\boldsymbol{\mathcal{C}}}
\DeclareMathOperator*{\argmax}{arg\,max}
\DeclareMathOperator*{\esssupOp}{-ess\,sup}
\newcommand\esssup[2]{\underset{#2}{#1\!\esssupOp}}
\spnewtheorem{observation}{Observation}{\bf}{\it}
\spnewtheorem*{theorem*}{Theorem}{\bf}{\it}
\spnewtheorem{definition*}{Definition}{\bf}{\it}
\begin{document}

\title{Distributionally Robust Stochastic Optimization with Dependence Structure%\thanks{Grants or other notes
%about the article that should go on the front page should be
%placed here. General acknowledgments should be placed at the end of the article.}
}
%\subtitle{Do you have a subtitle?\\ If so, write it here}
%\titlerunning{Short form of title}        % if too long for running head
\author{Rui Gao         \and
        Anton J. Kleywegt %etc.
}
%\authorrunning{Short form of author list} % if too long for running head
\institute{Rui Gao \at
              School of Industrial and Systems Engineering, Georgia Institute of Technology, Atlanta, GA\\
              \email{rgao32@gatech.edu}           %  \\
%             \emph{Present address:} of F. Author  %  if needed
           \and
           Anton J. Kleywegt \at
              School of Industrial and Systems Engineering, Georgia Institute of Technology, Atlanta, GA\\
              \email{anton@isye.gatech.edu}
}
% \date{Received: date / Accepted: date}
% The correct dates will be entered by the editor

\maketitle

\begin{abstract}
%Insert your abstract here. Include keywords, PACS and mathematical
%subject classification numbers as needed.
Distributionally robust stochastic optimization (DRSO) is a framework for decision-making problems under certainty, which finds solutions that perform well for a chosen set of probability distributions.
Many different approaches for specifying a set of distributions have been proposed.
The choice matters, because it affects the results, and the relative performance of different choices depend on the characteristics of the problems.
In this paper, we consider problems in which different random variables exhibit some form of dependence, but the exact values of the parameters that represent the dependence are not known.
We consider various sets of distributions that incorporate the dependence structure, and we study the corresponding DRSO problems.

In the first part of the paper, we consider problems with linear dependence between random variables.
We consider sets of distributions that are within a specified Wasserstein distance of a nominal distribution, and that satisfy a second-order moment constraint.
We obtain a tractable dual reformulation of the corresponding DRSO problem.
This approach is compared with the traditional moment-based DRSO, which considers all distributions whose first- and second-order moments satisfy certain constraints, and with the Wasserstein-based DRSO, which considers all distributions that are within a specified Wasserstein distance of a nominal distribution (with no moment constraints).
Numerical experiments suggest that our new formulation has superior out-of-sample performance.

In the second part of the paper, we consider problems with various types of rank dependence between random variables, including rank dependence measured by Spearman's footrule distance between empirical rankings, comonotonic distributions, box uncertainty for individual observations, and Wasserstein distance between copulas associated with continuous distributions.
We also obtain a dual reformulation of the DRSO problem.
A desirable byproduct of the formulation is that it also avoids an issue associated with the one-sided moment constraints in moment-based DRSO problems.

\keywords{Distributionally robust optimization \and Data-driven \and Copula \and Portfolio optimization}
% \PACS{PACS code1 \and PACS code2 \and more}
\subclass{90C15 \and 91G10}
\end{abstract}

\section{Introduction}
\label{sec:intro}

Stochastic optimization is an approach to optimization under uncertainty with a well-developed foundation of theory and practical applications.
A core issue in stochastic optimization is that often the underlying probability distribution is not known, or the notion of multiple realizations from a single underlying probability distribution may be a questionable description of reality.
Distributionally robust stochastic optimization (DRSO) is an approach to optimization under uncertainty in which, instead of assuming that there is an underlying probability distribution that is known to the optimizer, one finds a decision $x \in X$ that provides the best hedge against a set of probability distributions, by solving the following problem:
\begin{equation}
\label{eqn:DRSO}
\inf_{x \in X} \sup_{\bmu \in \frakM} \E_{\bmu}[\Psi(x,\bxi)],
\end{equation}
where the cost function $\Psi : X \times \Xi \mapsto \R$ depends on a random quantity $\bxi$ which takes values in $\Xi \subset \R^{K}$, and $\frakM$ is a subset of the set $\cP(\Xi)$ of all Borel probability distributions on $\Xi$.

The set $\frakM$ of probability distributions is chosen to make the resulting decisions robust against future variations in $\bxi$.
% typically guided by observed data but not restricted to the values of observed data.
Two approaches to choose the set $\frakM$ have been studied in some depth.
The moment-based approach considers distributions whose moments (such as mean and covariance) satisfy certain conditions \cite{scarf1958min,popescu2007robust,delage2010distributionally,zymler2013distributionally}.
The distance-based approach considers distributions that are close, in the sense of a chosen distance, to a nominal distribution $\bnu$, such as an empirical distribution.
Popular choices of the distances are $\phi$-divergences \cite{ben2013robust,bayraksan2015data}, which include Kullback-Leibler divergence \cite{Jiang2015Data-driven}, Burg entropy \cite{wang2016likelihood}, and total variation distance \cite{sun2015convergence} as special cases, Prokhorov metric \cite{erdougan2006ambiguous}, and Wasserstein metric \cite{esfahani2015data,gao2016distributionally}.

In some practical settings, the decision maker may be aware that the random variables exhibit some dependence structure, even if the parameter values that specify the dependence are not known.  For example, the decision maker may be aware of an approximate linear dependence, measured by Pearson's product-moment correlation coefficient \cite{galton1886regression,pearson1895note}, or some form of rank dependence, such as measured by Spearman's $\rho$ \cite{spearman1904proof} or Kendall's $\tau$ \cite{kendall1948rank}.
In this paper, we are interested in DRSO problems that take into account one of these types of dependence structure.

\subsection{Linear-correlationally robust stochastic optimization}

We first consider linear dependence.
For instance, the moment uncertainty set in \cite{delage2010distributionally} is defined by
\begin{equation}
\label{eqn:delage uncertainty set}
\begin{aligned}
\frakM \ \ := \ \ \Big\{\bmu \in \cP(\Xi) \; : \; & \E_{\bmu}[(\bxi - m_{0})^{\top} \Sigma_{0}^{-1} (\bxi - m_{0})] \ \leq \ \gamma_{0}, \\
& \E_{\bmu}[(\bxi - m_{0})(\bxi - m_{0})^{\top}] \ \preceq \ \Sigma_{0}\Big\}
\end{aligned}
\end{equation}
where $m_{0}$ is a specified ``center'' vector (such as a sample mean), $\gamma_{0} \geq 0$ can be viewed as the squared radius of the confidence region of the mean vector, and $\Sigma_{0} \succeq 0$ is often chosen to be the sample covariance matrix $\hSigma$ inflated by some constant $\gamma \geq 1$.
%The second constraint requires that the linear correlation structure of $\bmu$, expressed in terms of the centered second-order moment matrix, is close to the nominal $\Sigma_{0}$.
The resulting moment-based DRSO problem is given by 
\begin{equation}
\label{eqn:delage}
\begin{aligned}
\inf_{x \in X} \sup_{\bmu \in \cP(\Xi)} \Big\{\E_{\bmu}[\Psi(x, \bxi)] \; : \; & \E_{\bmu}[(\bxi - m_{0})^{\top} \Sigma_{0}^{-1} (\bxi - m_{0})] \ \leq \ \gamma_{0},\\
& \E_{\bmu}[(\bxi - m_{0})(\bxi - m_{0})^{\top}] \ \preceq \ \Sigma_{0}\Big\}
\end{aligned}
\end{equation}
Similar to other moment-based approaches, this approach is based on the curious assumption that certain conditions on the moments are known but that nothing else about the relevant distribution is known.
More often in applications, either one has data from repeated observations of the quantity $\bxi$, or one has no data, and in both cases the moment conditions do not describe exactly what is known about $\bxi$ or its distribution.
Moreover, such sets $\frakM$ often lead to unrealistic worst-case distributions which make the resulting decisions $x$ overly conservative \cite{wang2016likelihood,goh2010distributionally}.
Another characteristic of worst-case distributions in the moment uncertainty set in~\eqref{eqn:delage uncertainty set} is given below.

\begin{example}[Degeneracy of moment uncertainty set]
\label{eg:delage}
Suppose that $\gamma_{0} = 0$, and that $\Psi$ satisfies either of the following conditions:
\begin{enumerate}[label=(\roman*),leftmargin=2em]
\item
For given $x$, $\Psi(x,\cdot)$ is a concave function whose domain contains $m_{0}$.
\item
For given $x$, $\Psi(x,\cdot)$ is an indicator function of a set $A(x) \subset \Xi$ which contains $m_{0}$. 
\end{enumerate}
Then for the given $x$, $\bdelta_{m_{0}}$ is a worst-case distribution, independent of the value of $\Sigma_{0}$.
\qed
\end{example}
Therefore, under these conditions, the second-order moment constraint $\E_{\bmu}[(\bxi - m_{0})(\bxi - m_{0})^{\top}] \preceq \Sigma_{0}$ has no effect on the problem.
%This observation is intuitive. 
%In fact, when $\Psi$ is concave, Jensen's inequality implies that spreading the probability mass out of the mean vector cannot increase the objective value, and thus the Dirac measure $\bdelta_{m_{0}}$ is always optimal. 
%When $\Psi$ is an indicator function, $\bdelta_{m_{0}}$ is a feasible solution with objective value 1, and thus is optimal.
%It follows that the optimal value of a stochastic optimization problem \eqref{eqn:delage} with concave objective equals $\Psi(\hm)$, and the optimal value of the uncertainty quantification problem $\sup_{\bmu} \Pr_{\bmu}[\bxi \! \in \! A]$ equals 1. Hence, in some sense, the formulation \eqref{eqn:delage} is the \textit{most} conservative way of hedging against uncertainty in both situations above.
%The main reasons for this phenomenon is that formulation~\eqref{eqn:delage} only considers first and second order moment information, which allows too much freedom in the distribution.
%We note that a concave objective function $\Psi$ occurs, for example, when it represents the second-stage objective function of a two-stage stochastic linear program with objective uncertainty (see, e.g., Corollary 5.4.(i) in \cite{esfahani2015data}).
%We remark that when $\Psi$ is convex, Jensen's inequality implies that the worst-case distribution tends to spread out the probability mass as much as possible until the second-order moment constraint becomes tight, which intuitively explains Proposition 3 in \cite{delage2010distributionally}.

To overcome some of the drawbacks of formulation~\eqref{eqn:delage}, we consider sets $\frakM_{1}$ of distributions that combine the second-order moment constraint and the Wasserstein distance constraint, as follows:
\begin{equation}\label{eqn:frakM_{1}}
\begin{aligned}
\frakM_{1} \ & := \ \left\{\bmu \in \cP(\Xi) \ : \ W_{p}(\bmu,\bnu) \, \leq \, R_{0}, \; \E_{\bmu}[(\bxi - m_{0})(\bxi - m_{0})^{\top}] \, \preceq \, \Sigma_{0}\right\},
\end{aligned}
\end{equation}
where $R_{0} > 0$, $m_{0} \in \R^{K}$, $\Sigma_{0} \succeq 0$, $\bnu$ is some nominal distribution, and $W_{p}(\bmu,\bnu)$ is the Wasserstein metric of order $p \geq 1$ between $\bmu$ and $\bnu$, defined as
\[\begin{aligned}
&W_{p}^{p}(\bmu,\bnu) \\
 := & \min_{\bgamma \in \cP(\Xi^{2})} \bigg\{\int_{\Xi^{2}} d^{p}(\xi,\zeta) \bgamma(\xi,\zeta) \; : \; \bgamma \textnormal{ has marginal distributions } \bmu, \bnu\bigg\},
\end{aligned}
\]
where $d$ denotes a chosen metric on $\Xi$.
The choices of $m_{0}$ and $\Sigma_{0}$ are similar to the moment-based approach.
The set $\frakM_{1}$ contains all distributions that are close to the nominal distribution in terms of the Wasserstein metric and that satisfy the linear correlation structure expressed in terms of the centered second-order moment constraint.
It has been shown recently that DRSO with Wasserstein metrics has advantages over DRSO with $\phi$-divergences \cite{esfahani2015data,gao2016distributionally}.
%For example, it yields a solution that is immune to data perturbation and has nice out-of-sample performance, and it is expected that this approach is often less conservative.
% \begin{example}[Wasserstein uncertainty set does not control correlation structure]
%   Let $\Psi(\xi)=\psi(a^{\top}\xi)$ for some $a\in\R^{K}\setminus\{0\}$, some monotone increasing strictly convex function $\psi$ and all $\xi\in\Xi$, and let $\bnu=\frac{1}{N}\sum_{i=1}^{N} \hxi^{i}$. Suppose the subgradient of $\Psi$ attains its maximum among $\{\hxi^{i}\}_{i=1}^{N}$ uniquely at $\hxi^{N}$. Define $\zeta:=\argmax_{||\zeta||\leq 1} a^{\top}\zeta$. Then it follows that
%   \[
%     \bmu_m:= \frac{1}{N}\sum_{i=1}^{N-1} \hxi^{i} + (1-\frac{1}{m})\bdelta_{\hxi^{i}} + \frac{1}{m}\bdelta_{mNR_{0}\zeta},\ m=1,2,\cdots
%   \]
%   is a sequence of distributions converging to the optimal value of DRSO with Wasserstein metric.
%   If the underlying distribution has two fully linearly correlated components, say $\bxi_{1}=\bxi_{2}$ almost surely, then as long as $\zeta_{1}\neq\zeta_{2}$, $\bmu_m$ cannot be the underlying distribution, since its first and second components are not fully correlated. \qed
% \end{example}
% This example indicates that Wasserstein uncertainty may yield a worst-case distribution which is unlikely to happen. We expect that this phenomenon would have a large impact on the performance of the solution when several components of the underlying distribution exhibit high correlations.

Given $\frakM_{1}$, we define the following linear-correlationally robust stochastic optimization problem:
\begin{equation}
\label{problem:moment}
\inf_{x \in X} \sup_{\bmu \in \frakM_{1}} \E_{\bmu}[\Psi(x,\bxi)]. \tag{\textsf{Linear-CRSO}}
\end{equation}
In the first part of the paper we derive a tractable reformulation of this problem, and we examine its performance.

\subsection{Rank-correlationally robust stochastic optimization}
\label{sec:intro:copulaCRSO}

In the second part of the paper we consider a different dependence structure, motivated by notions of rank dependence.
First, we give its definition, and then we illustrate its modeling flexibility via some examples.
Given the nominal distribution $\bnu \in \cP(\Xi)$, let $F_{k} : \R \mapsto [0,1]$ denote its $k$-th marginal cumulative distribution function, $k = 1,\ldots,K$,
and let $F : \R^{K} \mapsto [0,1]^{K}$ be given by
\[
F(\zeta) \ \ := \ \ (F_{1}(\zeta_{1}),\ldots,F_{K}(\zeta_{K})).
\]
Note that $F$ is the vector of marginal cumulative distribution functions, and not the joint cumulative distribution function of $\bnu$.
Let $\sfd$ be a metric on $[0,1]^{K}$. We define a semimetric $\sfd_{F}$ on $\Xi$ by
\[
\sfd_{F}(\xi,\zeta) \ \ := \ \ \liminf_{\xi^{n} \to \xi,\ \zeta^{n} \to \zeta} \sfd\left(F(\xi^{n}),F(\zeta^{n})\right)
\]
Note that $\sfd_{F}$ is lower semicontinuous.
Let $q \in [1,\infty]$.
For any $\bmu,\bnu \in \cP(\Xi)$, we define
\begin{equation}
\label{eqn:sfW}
\begin{aligned}
& \sfW_{q}(\bmu,\bnu) \ \ := \ \ \\
& \begin{cases}
\displaystyle\bigg(\! \inf_{\bgamma \in \cP(\Xi^{2})} \! \Big\{\int_{\Xi^{2}} \sfd_{F}^{q}(\xi,\zeta) \bgamma(d\xi,d\zeta) \ : \ \bgamma \textnormal{ has marginals } \bmu,\bnu\Big\} \! \bigg)^{1/q}, & \mbox{if } 1 \leq q < \infty, \\
\displaystyle \inf_{\bgamma \in \cP(\Xi^{2})} \Big\{\esssup{\bgamma}{(\xi,\zeta) \in \Xi^{2}} \ \sfd_{F}(\xi,\zeta) \  : \ \bgamma \textnormal{ has marginals }\bmu,\bnu\Big\}, & \mbox{if } q = \infty.
\end{cases}
\end{aligned}
\end{equation}
This is a transport metric that generalizes the Wasserstein metric in which the transportation cost is given by $\sfd_{F}(\cdot,\cdot)$.

We consider the set $\frakM_{2}$ of probability distributions, where
\begin{equation}
\label{eqn:frakM_{2}}
\frakM_{2} \ \ := \ \ \Big\{\bmu \in \cP(\Xi) \; : \; W_{p}(\bmu,\bnu) \leq R_{0}, \ \sfW_{q}(\bmu,\bnu) \leq r_{0}\Big\},
\end{equation}
where $R_{0}, r_{0} > 0$.
The constraints suggest that $\bmu$ is close to $\bnu$ in the sense of both Wasserstein metric $W_{p}$ and transport metric $\sfW_{q}$.
Next we provide some examples to illustrate the meaning of different versions of $\sfW_{q}$.

\begin{example}[Empirical ranking]
\label{eg:ranking}
Suppose $\bnu = \frac{1}{N} \sum_{i=1}^{N} \bdelta_{\hxi^{i}}$ and $\bmu = \frac{1}{N} \sum_{i=1}^{N} \bdelta_{\xi^{i}}$.
In this case, the minimization problem involved in the definition of the transport metric $\sfW_{q}$ becomes an assignment problem.
Index the points $\hxi^{1},\ldots,\hxi^{N}$ and $\xi^{1},\ldots,\xi^{N}$ in such a way that an optimal assignment is given by
$(\hxi^{1},\xi^{1}),\ldots,(\hxi^{N},\xi^{N})$.
Thus the constraint $\sfW_{q}(\bmu,\bnu) \leq r_{0}$ is equivalent to
\begin{eqnarray}
\Big[\frac{1}{N} \sum_{i=1}^{N} \sfd_{F}^{q}\left(\xi^{i},\hxi^{i}\right)\Big]^{1/q} & \leq & r_{0} \nonumber \\
\Leftrightarrow \ \ \ \Big[\frac{1}{N} \sum_{i=1}^{N} \liminf_{\xi^{i,n} \to \xi^{i},\ \hxi^{i,n} \to \hxi^{i}} \sfd\left(F(\xi^{i,n}),F(\hxi^{i,n})\right)\Big]^{1/q} & \leq & r_{0}
\label{eqn:ranking}
\end{eqnarray}
For a set of $N$ numbers $\{\xi^{i,n}_{k}\}_{i=1}^{N}$, the mapping
\[
\xi^{i,n}_{k} \mapsto F_{k}(\xi^{i,n}_{k})
\]
is non-decreasing and assigns each $\xi^{i,n}_{k}$ a value in $\{0,1/N,2/N,\ldots,1\}$.
Thus the vector $\left(F_{k}(\xi^{1,n}_{k}),\ldots,F_{k}(\xi^{N,n}_{k})\right)$ can be viewed as rankings of $\{\xi^{i,n}_{k}\}_{i=1}^{N}$ that take values in $\{\frac{i}{N} \, : \, 0 \leq i \leq N\}$ (same ranking for different elements are allowed).
Therefore, the constraint $\sfW_{q}(\bmu,\bnu)\leq r_{0}$ controls the difference between the rankings of the components of points in the supports of $\bmu$ and $\bnu$.
The following examples consider various special cases that explain this in more detail.
\qed
\end{example}

\begin{example}[Spearman's footrule distance]
 In the above example, when $q = 1$ and $\sfd(u,v) = \|u-v\|_{1}$, the above constraint~\eqref{eqn:ranking} becomes
\begin{eqnarray}
\frac{1}{N} \sum_{i=1}^{N} \sum_{k=1}^{K} \liminf_{\xi^{i,n}_{k} \to \xi^{i}_{k},\ \hxi^{i,n}_{k} \to \hxi^{i}_{k}} \big|F_{k}(\xi^{i,n}_{k}) - F_{k}(\hxi^{i,n}_{k})\big| & \leq & r_{0} \nonumber \\
\Leftrightarrow \ \ \ \sum_{k=1}^{K} \sum_{i=1}^{N} \liminf_{\xi^{i,n}_{k} \to \xi^{i}_{k},\ \hxi^{i,n}_{k} \to \hxi^{i}_{k}} \big|N F_{k}(\xi^{i,n}_{k}) - N F_{k}(\hxi^{i,n}_{k})\big| & \leq & N^{2} r_{0}
\label{eqn:footrule}
\end{eqnarray}
Note that $\sum_{i=1}^{N} \big|N F_{k}(\xi^{i,n}_{k}) - N F_{k}(\hxi^{i,n}_{k})\big|$ is a measure of the distance between the rankings of $\left(F_{k}(\xi^{i,n}_{k}) : i=1,\ldots,N\right)$ and $\left(F_{k}(\hxi^{i,n}_{k}) : i=1,\ldots,N\right)$, and is called Spearman's footrule in \cite{diaconis1977spearman}.
\qed
\end{example}

\begin{example}[Comonotonicity]
Let $K = 2$.
Suppose that $\bnu = \frac{1}{N} \sum_{i=1}^{N} \bdelta_{\hxi^{i}}$ is comonotonic, that is, for any two observations $\hxi^{i} = (\hxi^{i}_{1},\hxi^{i}_{2})$ and $\hxi^{j} = (\hxi^{j}_{1},\hxi^{j}_{2})$, it holds that $\hxi^{i}_{1} \leq \hxi^{j}_{1}$ if and only if $\hxi^{i}_{2} \leq \hxi^{j}_{2}$.
Without loss of generality, assume that $\hxi^{i}$ are sorted in increasing order.
In addition, assume that they have different values component-wise, that is, $\hxi^{1}_{k} < \hxi^{2}_{k} < \cdots < \hxi^{N}_{k}$ for all $k$.
As before, consider $\bmu = \frac{1}{N} \sum_{i=1}^{N} \bdelta_{\xi^{i}}$.
Let $q = 1$.
Then \eqref{eqn:footrule} is equivalent to
\[
\sum_{k=1}^{K} \sum_{i=1}^{N} \min\left\{\liminf_{\xi^{i,n}_{k} \to \xi^{i}_{k}} |N F_{k}(\xi^{i,n}_{k}) - (i-1)|, \, \liminf_{\xi^{i,n}_{k} \to \xi^{i}_{k}} |N F_{k}(\xi^{i,n}_{k}) - i)|\right\} \ \ \leq \ \ N^{2} r_{0},
\]
which is an approximate measure of the deviation of $\bmu$ from comonotonicity.
%In particular, $\sfW_{q}(\bmu,\bnu)=0$ means $\bmu$ is also comonotonic.
\qed
\end{example}

\begin{example}[Box uncertainty set for each individual data]
\label{eg:box}
Let $q = \infty$. Then constraint~\eqref{eqn:ranking} becomes
\[\begin{aligned}
\liminf_{\xi^{i,n} \to \xi^{i},\ \hxi^{i,n} \to \hxi^{i}} \sfd_{F}\big(\left(F_{1}(\xi^{i,n}_{1}),\ldots,F_{K}(\xi^{i,n}_{K})\right), \left(F_{1}(\hxi^{i,n}_{1}),\ldots,F_{K}(\hxi^{i,n}_{K})\right)\big) \ \ \leq \ \ r_{0}, \\
\quad \forall 1 \leq i \leq N.
\end{aligned}
\]
Thus each individual data point is constrained to be in a certain region.
In particular, when $\sfd(u,v) = ||u - v||_{\infty}$, then the above constraint becomes
\[
\begin{aligned}
\xi^{i}_{k} \ \leq \ \hxi^{j}_{k}, \quad \forall \ i, j \textnormal{ such that } F^{-}_{k}(\hxi^{j}_{k}) - F_{k}(\hxi^{i}_{k}) \geq r_{0}, \\
\xi^{i}_{k} \ \geq \ \hxi^{j}_{k}, \quad \forall \ i, j \textnormal{ such that } F^{-}_{k}(\hxi^{i}_{k}) - F_{k}(\hxi^{j}_{k}) \geq r_{0},
\end{aligned}
\]
where $F^{-}_{k}(\hxi_{k}) := \lim_{\xi_{k} \uparrow \hxi_{k}} F_{k}(\xi_{k})$ denotes the left limit of $F_{k}$ at $\hxi_{k}$.
Thus, each $\xi^{i}_{k}$ is constrained in some interval containing $\hxi^{i}_{k}$.
In particular, if $r_{0} = 0$, then $\xi^{i}_{k}$ belongs to $[\hxi^{i-}_{k},\hxi^{i+}_{k}]$, where $\hxi^{i-}_{k}$ (resp.~$\hxi^{i+}_{k}$) is the largest (resp.~smallest) data value among $\{\hxi^{i}_{k}\}_{i=1}^{N}$ that is strictly smaller (resp.~greater) than $\hxi^{i}_{k}$.
\qed
\end{example}

\begin{example}[Copula]
\label{eg:copula}
Let $\bmu$ be any continuous distribution on $\R^{K}$ with cumulative distribution function $H$. Sklar's theorem \cite{sklar1959fonctions} states that there exists a joint distribution $\bC^{\bmu}$ on $[0,1]^{K}$ with uniform marginals on $[0,1]$, such that $H$ can be expressed in terms of its marginal cumulative distribution functions $F^{\bmu}_{k}$ and $\bC^{\bmu}$ as follows:
\begin{equation}
\label{eqn:sklar}
H(\xi_{1},\ldots,\xi_{K}) \ \ = \ \ \bC^{\bmu}\left(F^{\bmu}_{1}(\xi_{1}),\ldots,F^{\bmu}_{K}(\xi_{K})\right), \quad \forall \ \xi \in \R^{K}.
\end{equation}
Such $\bC^{\bmu}$ is called a \textit{copula}.
Suppose that $F^{\bmu}_{k} = F_{k}$ for all $k$.
Then using change-of-variables, $\sfW_{q}(\bmu,\bnu)$ can be written as
\[
\begin{aligned}
&\sfW_{q}(\bmu,\bnu) \\
& \ \ = \ \ \bigg(\min_{\bgamma \in \cP(\Xi^{2})} \Big\{\int_{\Xi^{2}} \sfd^{q}(F(\xi),F(\zeta) \bgamma(d\xi,d\zeta) \ : \ \bgamma \textnormal{ has marginals } \bmu,\bnu\Big\}\bigg)^{1/q} \\
& \ \ = \ \ \bigg(\min_{\bpi \in \cP([0,1]^{2K})}  \Big\{\int_{[0,1]^{2K}} \sfd^{q}(u,v) \bpi(du,dv) \ : \ \bpi\textnormal{ has marginals } \bC^{\bmu},\bC^{\bnu}\Big\}\bigg)^{1/q}.
\end{aligned}
\]
% When $\bmu$ is close to $\bnu$ but has different marginals than $\bnu$, \eqref{eqn:sfW} can be viewed an approximation of the Wasserstein distance between copulas of $\bmu$ and $\bnu$.
Therefore, the constraint $\sfW_{q}(\bmu,\bnu) \leq r_{0}$ suggests that the copulas of $\bmu$ and $\bnu$ are close to each other in terms of Wasserstein distance.
\qed
\end{example}

Based on these examples, we observe that for data-driven problems in which the distribution is finite-supported, the constraint $\sfW_{q}(\bmu,\bnu) \leq r_{0}$ controls the difference between rankings of the distributions, and more generally, it controls the difference between copulas of the distributions.
The corresponding decision problem is given by
\begin{equation}
\label{problem:copula}
\inf_{x \in X} \sup_{\bmu \in \frakM_{2}} \E_{\bmu}[\Psi(x,\bxi)].
\tag{\textsf{Rank-CRSO}}
\end{equation}
We study this problem in the second part of the paper.

\subsection{Related literature}

The study of distributionally robust stochastic optimization can be traced back to \cite{vzavckova1966minimax,dupavcova1987minimax}.
%Recently it regained attention in the operations research literature, and sometimes is called data-driven stochastic optimization or ambiguous stochastic optimization.
We have already mentioned several papers using moment-based and distance-based approaches.
In addition, it is known that DRSO problems with $\frakM$ given by a $\phi$-divergence ball can easily incorporate moment constraints, simply by adding the constraints to the reformulation of the problem \cite{wang2016likelihood}.
However, for DRSO problems with $\frakM$ given by a Wasserstein ball, it has not been shown before whether moment constraints can be incorporated without losing tractability.
In addition, since the Wasserstein metric includes the total variation distance as a special case (see, e.g., Remark 4 in \cite{gao2016distributionally}), our result for problem \eqref{problem:moment} implies the result in \cite{Jiang2015Data-driven}, in which $\frakM$ is given by a total variation distance constraint and a moment constraint.
% Copula theory has be widely used in finance and statistics. However, it is not commonly used in stochastic optimization. \cite{henrion2011convexity,cheng2015chance} studied chance-constrained programming using copulas.
To the best of our knowledge, our paper is the first to study the effect of dependence structure (linear dependence and rank dependence) on stochastic optimization when only inexact information on the dependence structure is available.
A related topic is to study the worst-case performance of stochastic optimization problems when marginal distributions are known and no information on the dependence structure is available, see \cite{agrawal2012price,doan2012complexity,doan2015robustness}.

\subsection{Our contribution}
We summarize our results as follows:
\begin{itemize}
\item
Motivated by the poor performance of moment-based DRSO problems, we propose a new formulation~\eqref{problem:moment}, that takes into account the distribution's second-order moment information as well as Wasserstein distance to the nominal distribution.
Based on a constructive proof, we derive a tractable dual reformulation of this problem in Theorem~\ref{thm:moment}. 
\item
We investigate how the degree of correlation affects the performance of three DRSO approaches: DRSO with $\frakM$ given by a Wasserstein ball, DRSO with $\frakM$ given by moment constraints, and DRSO with $\frakM$ given by both a Wasserstein ball and moment constraints~\eqref{problem:moment}.
Numerical results on a portfolio optimization problem indicate that the new formulation outperforms the others in all (low-, medium-, high-) correlation regimes.
%Our duality results also provide a theoretical justification for an existing empirical evidence of the performance of LASSO and ridge regression in the presence of collinearity.
\item
We also propose a new formulation~\eqref{problem:copula} which, with appropriately chosen parameters, controls different dependence structures, including Spearman's footrule distance between empirical rankings, comonotonicity, box uncertainty for individual data points, and in general Wasserstein distance between copulas.
We also derive the dual reformulation for~\eqref{problem:copula} in Theorems~\ref{thm:copula} and \ref{thm:q=infty}.
% and justify the usefulness of the new formulation by comparing it with \eqref{problem:moment} and other choices of distance.
\end{itemize}

\section{Linear-correlationally robust stochastic optimization}
\label{sec:linearCRSO}

 In this section, we study linear correlationally robust stochastic optimization problem \eqref{problem:moment}.
We will derive a convex programming dual reformulation of \eqref{problem:moment} in Section \ref{sec:linearCRSO_{d}ual}, and apply it to a portfolio optimization problem in \ref{sec:portfolio}.

Since we focus only on the inner maximization problem, we suppress $x$ in $\Psi(x,\xi)$.
Throughout this section, we assume $1\leq p<\infty$, $\Psi$ is upper semi-continuous, and satisfies the growth rate condition
\[
\kappa \ \ := \ \ \limsup_{d(\xi,\zeta_{0})\to\infty}\frac{\max\big(\Psi(\xi)-\Psi(\zeta_{0}),0\big)}{d^{p}(\xi,\zeta_{0})} < \infty,
\]
for some $\zeta_{0} \in \Xi$.

\subsection{Dual reformulation of \eqref{problem:moment}}
\label{sec:linearCRSO_{d}ual}

The main result is given in the following Theorem \ref{thm:moment}.

\begin{theorem}\label{thm:moment}
Assume $\E_{\bnu}[(\bxi - m_{0})(\bxi - m_{0})^{\top}] \prec \Sigma_{0}$. Then~\eqref{problem:moment} has a strong dual problem
\[
\min_{\substack{\lambda\geq0\\\Lambda\succeq 0}}\left\{ \lambda R_{0}^{p} \! +\! \langle \Lambda,\Sigma_{0} \rangle \! +\! \int_\Xi \sup_{\xi\in\Xi} \big[\Psi(\xi) \! -\! \lambda d^{p}(\xi,\zeta) \! -\! (\xi-m_{0})^{\top} \Lambda(\xi-m_{0})\big]\bnu(d\zeta)\right\}.
\]
\end{theorem}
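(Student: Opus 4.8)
The plan is to establish strong duality by passing to a Lagrangian over couplings and then explicitly constructing a worst-case distribution that attains the dual value. Since $W_{p}(\bmu,\bnu)\le R_{0}$ holds if and only if there is a coupling $\bgamma\in\cP(\Xi^{2})$ with marginals $\bmu,\bnu$ and $\int_{\Xi^{2}}d^{p}\,d\bgamma\le R_{0}^{p}$, and since the second-order moment constraint depends only on the first marginal of $\bgamma$, the inner maximization $\sup_{\bmu\in\frakM_{1}}\E_{\bmu}[\Psi(\bxi)]$ equals
\[
\sup_{\bgamma}\Big\{\int_{\Xi^{2}}\Psi(\xi)\,\bgamma(d\xi,d\zeta)\;:\;\bgamma\text{ has second marginal }\bnu,\ \int d^{p}\,d\bgamma\le R_{0}^{p},\ \int(\xi-m_{0})(\xi-m_{0})^{\top}d\bgamma\preceq\Sigma_{0}\Big\}.
\]
Dualizing the budget constraint with a multiplier $\lambda\ge0$ and the semidefinite constraint with a multiplier $\Lambda\succeq0$, adding the nonnegative slack terms $\lambda(R_{0}^{p}-\int d^{p}\,d\bgamma)$ and $\langle\Lambda,\Sigma_{0}-\int(\xi-m_{0})(\xi-m_{0})^{\top}d\bgamma\rangle$, and then bounding, for $\bnu$-a.e.\ $\zeta$, the conditional integrand by its supremum over $\xi$ gives the weak-duality inequality ``$\le$''. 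Here one uses that the conditional distributions of $\bgamma$ given $\zeta$ may be chosen freely, so that in fact $\sup_{\bgamma}\int[\Psi(\xi)-\lambda d^{p}(\xi,\zeta)-(\xi-m_{0})^{\top}\Lambda(\xi-m_{0})]\,d\bgamma=\int_{\Xi}\sup_{\xi\in\Xi}[\,\cdots\,]\,\bnu(d\zeta)$, an interchange proved by measurable selection of near-maximizers.

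For the reverse inequality, observe that the dual objective
\[
\Phi(\lambda,\Lambda)\ :=\ \lambda R_{0}^{p}+\langle\Lambda,\Sigma_{0}\rangle+\int_{\Xi}\sup_{\xi\in\Xi}\big[\Psi(\xi)-\lambda d^{p}(\xi,\zeta)-(\xi-m_{0})^{\top}\Lambda(\xi-m_{0})\big]\bnu(d\zeta)
\]
is convex and lower semicontinuous on the cone $\{\lambda\ge0,\ \Lambda\succeq0\}$, and that it is coercive there. Indeed, evaluating the integrand along the diagonal selection $\xi=\zeta$ gives $\Phi(\lambda,\Lambda)\ge\lambda R_{0}^{p}+\langle\Lambda,\Sigma_{0}-\E_{\bnu}[(\bxi-m_{0})(\bxi-m_{0})^{\top}]\rangle+\E_{\bnu}[\Psi]$, and the hypothesis $\E_{\bnu}[(\bxi-m_{0})(\bxi-m_{0})^{\top}]\prec\Sigma_{0}$ forces the middle term to grow like a positive multiple of $\|\Lambda\|$; meanwhile $\Phi(\lambda,\Lambda)<\infty$ once $\lambda$ is large, because the growth condition on $\Psi$ is inherited by $\Psi(\cdot)-(\cdot-m_{0})^{\top}\Lambda(\cdot-m_{0})$ (subtracting a nonnegative quadratic does not increase the rate $\kappa$) and $\bnu$ has finite $p$-th moment. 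Hence a minimizer $(\lambda^{*},\Lambda^{*})$ exists, which also accounts for the ``$\min$'' in the statement. It then remains to construct a primal solution attaining $\Phi(\lambda^{*},\Lambda^{*})$: for $\bnu$-a.e.\ $\zeta$ let $S(\zeta)$ be the set of maximizers of $g_{\zeta}(\xi):=\Psi(\xi)-\lambda^{*}d^{p}(\xi,\zeta)-(\xi-m_{0})^{\top}\Lambda^{*}(\xi-m_{0})$, which is upper semicontinuous in $\xi$ and, at the dual optimum, has an attained finite supremum; computing the subdifferential of $\Phi$ at $(\lambda^{*},\Lambda^{*})$ by an ``integrated Danskin'' argument and imposing first-order optimality over the cone yields a measurable kernel $\zeta\mapsto\rho_{\zeta}\in\cP(S(\zeta))$ such that the coupling $\bgamma^{*}(d\xi,d\zeta):=\rho_{\zeta}(d\xi)\,\bnu(d\zeta)$ satisfies $\int d^{p}\,d\bgamma^{*}\le R_{0}^{p}$ with $\lambda^{*}(R_{0}^{p}-\int d^{p}\,d\bgamma^{*})=0$, and $\int(\xi-m_{0})(\xi-m_{0})^{\top}d\bgamma^{*}\preceq\Sigma_{0}$ with $\langle\Lambda^{*},\Sigma_{0}-\int(\xi-m_{0})(\xi-m_{0})^{\top}d\bgamma^{*}\rangle=0$. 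Its first marginal $\bmu^{*}$ then lies in $\frakM_{1}$, and because $\rho_{\zeta}$ is concentrated on the maximizers of $g_{\zeta}$, complementary slackness gives $\E_{\bmu^{*}}[\Psi]=\int_{\Xi}\sup_{\xi}g_{\zeta}(\xi)\,\bnu(d\zeta)+\lambda^{*}R_{0}^{p}+\langle\Lambda^{*},\Sigma_{0}\rangle=\Phi(\lambda^{*},\Lambda^{*})$, so the inner maximization is $\ge$ the dual value; combined with weak duality this proves the claim and the attainment.

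The main obstacle is the measure-theoretic content of the construction: verifying that the supremum defining $g_{\zeta}$ is attained for $\bnu$-a.e.\ $\zeta$ (the case $\lambda^{*}\le\kappa$ must be controlled using the growth condition, or one works with a sequence of $\varepsilon$-maximizers and checks that feasibility survives the limit), establishing the required measurable-selection and measurable-kernel statements, and making the integrated Danskin computation rigorous --- in particular the point that a single selection $\zeta\mapsto\xi^{*}(\zeta)\in S(\zeta)$ may fail to meet the budget $R_{0}^{p}$ and the matrix bound $\Sigma_{0}$ simultaneously, so one must mix over finitely many maximizers, and calibrating such a mixture to hit a target in the cone $\mathbb{S}^{K}_{+}$ is more delicate than the scalar splitting used for pure Wasserstein DRSO. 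An alternative route is to dualize only the semidefinite constraint and apply Sion's minimax theorem (the $W_{p}$-ball around $\bnu$ is weakly compact, and $\Lambda\mapsto\E_{\bmu}[\Psi-(\cdot-m_{0})^{\top}\Lambda(\cdot-m_{0})]$ is affine), reducing the claim to the pure-Wasserstein strong-duality result in \cite{gao2016distributionally}; there the delicate point is the upper semicontinuity of $\bmu\mapsto\E_{\bmu}[\Psi]$ on $W_{p}$-balls, which is again where the growth condition enters.
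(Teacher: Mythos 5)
Your proposal is correct and follows essentially the same route as the paper: weak duality via disintegration of the coupling over $\bnu$, existence of a dual minimizer, a subdifferential (``integrated Danskin'', i.e.\ generalized Moreau--Rockafellar) computation yielding a kernel supported on the argmax sets with the required complementary slackness, and an approximation/limiting argument for the boundary case $\lambda^{*}=\kappa$. The only substantive difference is that you obtain the dual minimizer by a direct coercivity bound in $\Lambda$ deduced from $\E_{\bnu}[(\bxi-m_{0})(\bxi-m_{0})^{\top}]\prec\Sigma_{0}$ via the diagonal selection $\xi=\zeta$, whereas the paper first restricts $\lambda$ to a compact interval and then invokes strong duality for the moment problem under Slater's condition to produce $\Lambda_{*}$ for fixed $\lambda_{*}$; both work, and your variant is slightly more direct.
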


In the dual problem, $\lambda$ and $\Lambda$ are the Lagrangian multiplier of primal Wasserstein constraint and second-order moment constraint.
  The dual objective is a convex function of $\lambda$ and $\Lambda$.
  The measurability of the integrand is guaranteed by Lemma \ref{lemma:random_lsc} below. Denote by $(\Xi,\mathscr{B}_{\bnu}(\Xi),\bnu)$ the completion of measure space $(\Xi,\mathscr{B}(\Xi),\bnu)$ (see, e.g., Lemma 1.25 in \cite{kallenberg2006foundations}). A function $f:\R^m\times\Xi\to\bar{\R}$ is called a \textit{normal integrand} \cite{rockafellar2009variational}, if the associated epigraphical multifunction $\zeta\mapsto\mathrm{epi}\ f(\cdot,\zeta)$ is closed valued and measurable.

  \begin{lemma}\label{lemma:random_lsc}
    The function $\Phi:\R\times\R^{K\times K}\times\Xi$ defined by
    \[
      \Phi(\lambda,\Lambda,\zeta):=\sup_{\xi\in\Xi}[\Psi(\xi) - (\xi-m_{0})^{\top}\Lambda (\xi-m_{0}) - \lambda d^{p}(\xi,\zeta)]
    \]
    is a normal integrand with respect to $\mathscr{B}(\R)\otimes\mathscr{B}(\R^{K\times K})\otimes\mathscr{B}_{\bnu}(\Xi)$.
  \end{lemma}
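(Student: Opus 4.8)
The plan is to verify that $\Phi$ is a normal integrand by exhibiting it as the partial supremum over $\xi$ of a jointly measurable function that is lower semicontinuous in the pair $(\lambda,\Lambda)$ for each fixed $\xi$, and then invoking the standard preservation results for normal integrands (Rockafellar--Wets, \cite{rockafellar2009variational}). Concretely, define $g:\R\times\R^{K\times K}\times\Xi\times\Xi\to\bar\R$ by
\[
g(\lambda,\Lambda,\zeta,\xi) \ := \ (\xi-m_{0})^{\top}\Lambda(\xi-m_{0}) + \lambda\, d^{p}(\xi,\zeta) - \Psi(\xi),
\]
so that $\Phi(\lambda,\Lambda,\zeta) = -\inf_{\xi\in\Xi} g(\lambda,\Lambda,\zeta,\xi)$. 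First I would check that $g$ is a normal integrand in $(\lambda,\Lambda,\zeta)$ jointly, with $\xi$ playing the role of the ``variable'': for each fixed $\xi$, the map $(\lambda,\Lambda,\zeta)\mapsto g(\lambda,\Lambda,\zeta,\xi)$ is continuous in $(\lambda,\Lambda)$ (it is affine in $\lambda$ and in $\Lambda$) and lower semicontinuous in $\zeta$ (since $d$ is continuous, $d^{p}$ is continuous in $\zeta$, and $\Psi(\xi)$ does not depend on $\zeta$). Since $g$ is continuous in $(\lambda,\Lambda)$, measurable in $(\zeta,\xi)$, and jointly lower semicontinuous where needed, it is a Carath\'eodory-type integrand and hence a normal integrand with respect to $\mathscr{B}(\R)\otimes\mathscr{B}(\R^{K\times K})\otimes\mathscr{B}(\Xi)$; here one uses that $\Psi$ is upper semicontinuous (so $-\Psi$ is lower semicontinuous and Borel measurable).

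Next I would apply the fact that the pointwise infimum of a normal integrand over an auxiliary variable ranging in a Polish space is again a normal integrand, provided a mild inf-compactness or tightness condition holds so that the measurable-selection machinery applies (Theorem~14.37 and its corollaries in \cite{rockafellar2009variational}). This is exactly where the growth rate condition enters: the hypothesis $\kappa<\infty$ guarantees that for $\lambda>\kappa$ the function $\xi\mapsto g(\lambda,\Lambda,\zeta,\xi)$ has bounded sublevel sets (the $\lambda d^{p}(\xi,\zeta)$ term dominates $\Psi(\xi)-\Psi(\zeta_{0})$ for $d(\xi,\zeta)$ large, and $(\xi-m_{0})^{\top}\Lambda(\xi-m_{0})\ge 0$ only helps), so the infimum over $\xi$ is attained and the value function is lower semicontinuous in $(\lambda,\Lambda,\zeta)$ on the region $\lambda>\kappa$; for $\lambda\le\kappa$ the supremum may be $+\infty$ but that is harmless since a normal integrand is allowed to take the value $+\infty$, and the epigraphical multifunction remains closed-valued and measurable. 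Finally, passing from $\mathscr{B}(\Xi)$ to the $\bnu$-completion $\mathscr{B}_{\bnu}(\Xi)$ only enlarges the $\sigma$-algebra, so measurability with respect to $\mathscr{B}(\R)\otimes\mathscr{B}(\R^{K\times K})\otimes\mathscr{B}(\Xi)$ a fortiori yields measurability with respect to $\mathscr{B}(\R)\otimes\mathscr{B}(\R^{K\times K})\otimes\mathscr{B}_{\bnu}(\Xi)$, and the epigraphical multifunction of $\Phi(\cdot,\cdot,\zeta)$ inherits closed values from the lower semicontinuity just established.

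The main obstacle I anticipate is the careful handling of the region $\{\lambda\le\kappa\}$ (and more generally directions in which $g(\cdot,\Lambda,\zeta,\cdot)$ fails to be inf-compact), where $\Phi$ can jump to $+\infty$: I need to confirm that the epigraphical multifunction $\zeta\mapsto\mathrm{epi}\,\Phi(\cdot,\cdot,\zeta)$ is still closed-valued and measurable there, which amounts to checking lower semicontinuity of $\Phi$ as an extended-real-valued function rather than finiteness. A clean way to do this is to write $\Phi$ as the increasing limit, as $M\to\infty$, of the truncated value functions $\Phi_{M}(\lambda,\Lambda,\zeta):=\sup_{\xi:\,d(\xi,\zeta_{0})\le M}[\,\cdot\,]$, each of which is a supremum of a normal integrand over a compact set and hence a normal integrand by the standard result for compact index sets; a countable supremum of normal integrands is again a normal integrand, which gives the claim. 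The remaining steps — joint measurability of $d^{p}$, Borel measurability of $\Psi$ from upper semicontinuity, and the closedness of the epigraph — are routine.
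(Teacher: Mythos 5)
Your proposal is correct and follows essentially the same route as the paper: show that the integrand $g(\xi,\lambda,\Lambda,\zeta)=\Psi(\xi)-(\xi-m_{0})^{\top}\Lambda(\xi-m_{0})-\lambda d^{p}(\xi,\zeta)$ is a normal integrand with $\zeta$ as the measurable parameter (via the joint measurability criterion, using upper semicontinuity of $\Psi$), and then pass to the partial supremum over $\xi$ by a preservation theorem for optimal-value functions. The one remark is that your anticipated obstacle on the region $\{\lambda\le\kappa\}$ is not actually an obstacle: for each fixed $\zeta$, $\Phi(\cdot,\cdot,\zeta)$ is a pointwise supremum of functions that are affine in $(\lambda,\Lambda)$, hence automatically convex and lower semicontinuous as an extended-real-valued function (closedness of the epigraph needs no inf-compactness or growth condition), so the truncation over balls $d(\xi,\zeta_{0})\le M$ is only needed, if at all, for measurability of the epigraphical multifunction — which the paper obtains directly from the cited preservation result.
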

  \begin{proof}
    Define a function $g:\Xi\times\R\times\R^{K\times K}\times\Xi\to\bar{\R}$ by
    \[
      g(\xi,\lambda,\Lambda,\zeta)=\Psi(\xi)-(\xi-m_{0})^{\top}\Lambda(\xi-m_{0})-\lambda d^{p}(\xi,\zeta).
    \]
    Then for every $\zeta\in\Xi$, $-g(\cdot,\cdot,\cdot,\zeta)$ is lower semi-continuous, thus $g$ is $\mathscr{B}(\Xi)\otimes\mathscr{B}(\R)\otimes\mathscr{B}(\R^{K\times K})\otimes\mathscr{B}_{\bnu}(\Xi)$-measurable. Hence by joint measurability criterion (see, e.g., Corollary 14.34 in \cite{rockafellar2009variational}), $g$ is a normal integrand, thereby the function $\Phi$ is also a normal integrand (Theorem 7.38 in \cite{shapiro2009lectures}). \qed
  \end{proof}

  \begin{proof}[Proof of Theorem \ref{thm:moment}] We divide the proof into four steps.
    \begin{enumerate}[label=\bf{Step} \arabic*.\ ,align=left, leftmargin=0pt,  listparindent=\parindent, labelwidth=0pt, itemindent=!,itemsep=1em]      
      \item We first show weak duality. Observe that for any random vector $(\bxi,\bzeta)$ with joint distribution $\bgamma\in\cP(\Xi^{2})$ and marginals $\bmu,\bnu\in\cP$, by property of conditional expectation, it holds that
        \[
          \int_\Xi \Psi(\xi)\bmu(d\xi) = \int_{\Xi^{2}} \Psi(\xi) \bgamma(d\xi,d\zeta) = \int_{\Xi^{2}} \Psi(\xi) \bgamma_{\zeta}(d\xi) \bnu(d\zeta),
        \]
        where $\bgamma_\zeta$ represents the conditional distribution of $\bxi$ given $\bzeta=\zeta$. Thus we can write problem \eqref{problem:moment} as
        \[
          \begin{aligned}
            & \sup_{\bmu\in\frakM_{1}} \int_\Xi \Psi(\xi) \bmu(d\xi)\\
           = & \sup_{\{\bgamma_\zeta\}_\zeta\subset\cP(\Xi)} \bigg\{\int_{\Xi^{2}} \Psi(\xi) \bgamma_{\zeta}(d\xi) \bnu(d\zeta): \int_{\Xi^{2}} d^{p}(\xi,\zeta) \bgamma_\zeta(d\xi)\bnu(d\zeta) \leq R_{0}^{p}, \\
           & \hspace{120pt}  \int_{\Xi^{2}} (\xi-m_{0})(\xi-m_{0})^{\top} \bgamma_\zeta(d\xi)\bnu(d\zeta) \preceq \Sigma_{0}\bigg\}.
           \end{aligned}
        \]
        The Lagrangian weak duality yields that 
        \[\begin{aligned}
            & \sup_{\bmu\in\frakM_{1}} \int_\Xi \Psi(\xi) \bmu(d\xi)\\
           \leq & \inf_{\lambda\geq0,\Lambda\succeq 0} \bigg\{ \lambda R_{0}^{p} + \langle \Lambda,\Sigma_{0}\rangle + \\
           & \ \sup_{\{\bgamma_\zeta\}_\zeta\subset\cP(\Xi)} \! \Big\{\! \int_{\Xi^{2}} \! \big[\Psi(\xi) -(\xi-m_{0})^{\top} \Lambda(\xi-m_{0}) -\lambda d^{p}(\xi,\zeta)\big] \bgamma_{\zeta}(d\xi) \bnu(d\zeta) \Big\} \bigg\}\\
           \leq & \inf_{\lambda\geq0,\Lambda\succeq 0} \bigg\{ \lambda R_{0}^{p} + \langle \Lambda,\Sigma_{0}\rangle \\
            & \hspace{63pt} +  \int_\Xi \sup_{\xi\in\Xi} [\Psi(\xi) -(\xi-m_{0})^{\top} \Lambda(\xi-m_{0}) -\lambda d^{p}(\xi,\zeta)] \bnu(d\zeta) \bigg\}.
          \end{aligned}
        \]

      \item We next show the existence of a dual minimizer. Let $h(\lambda,\Lambda)$ be the dual objective function. To begin with, let $v_{d}$ be the dual optimal value, and we claim that there exists $L>0$ such that
        \begin{equation}\label{eqn:bounded_lambda}
          v_{d} = \inf_{0\leq\lambda\leq L,\Lambda\succeq 0} h(\lambda,\Lambda).
        \end{equation}
        Indeed, the growth rate assumption on $\Psi$ implies that there exists $M>0$ such that $\Psi(\xi)-\Psi(\zeta_{0})\leq M d^{p}(\xi,\zeta_{0})$ for all $\xi\in\Xi$. By choosing $\lambda=M$ and $\Lambda=0$, we have that
        \[\begin{aligned}
          v_{d} & \leq MR_{0}^{p} + \int_\Xi \sup_{\xi\in\Xi}[\Psi(\zeta_{0}) + M (d^{p}(\xi,\zeta_{0}) - d^{p}(\xi,\zeta))] \bnu(d\zeta)\\
          & \leq MR_{0}^{p} + \Psi(\zeta_{0}) + M\int_\Xi d^{p}(\zeta_{0},\zeta) \bnu(d\zeta) < \infty.
          \end{aligned}
        \]
        On the other hand, for any feasible solution $(\lambda,\Lambda)$, it holds that
        \[
          \begin{aligned}
            h(\lambda,\Lambda)\geq & \lambda R_{0}^{p} + \langle \Lambda,\Sigma_{0} \rangle + \int_\Xi \big[\Psi(\zeta) - (\zeta-m_{0})^{\top} \Lambda(\zeta-m_{0})\big] \bnu(d\zeta)\\
            = & \lambda R_{0}^{p} + \Big\langle \Lambda, \Sigma_{0} - \int_\Xi (\zeta-m_{0})(\zeta-m_{0})^{\top} \bnu(d\zeta) \Big\rangle + \int_\Xi \Psi(\zeta) \bnu(d\zeta) \\
            \geq & \lambda  R_{0}^{p} + \int_\Xi \Psi(\zeta) \bnu(d\zeta),
          \end{aligned}
        \]
        which tends to $\infty$ as $\lambda\to\infty$.
        Hence the claim holds by choosing sufficiently large $L$.

        Now let $(\lambda^{(m)},\Lambda^{(m)})_m$ be a minimizing sequence of problem \eqref{eqn:bounded_lambda}. Since $(\lambda^{(m)})_m\subset[0,L]$, Bolzano-Weierstrass theorem implies that it has a convergent subsequence, whose limit is denoted by $\lambda_{\ast}$. Fixing $\lambda=\lambda_{\ast}$, the weak Lagrangian dual of the problem
        \begin{equation}\label{eqn:moment_{p}rimal_Gamma}
          \inf_{\Lambda\succeq 0} h(\lambda_{\ast},\Lambda)
        \end{equation}
        is given by
        \begin{equation}\label{eqn:moment_{d}ual_Gamma}\begin{aligned}
          &\inf_{\Lambda\succeq 0} \bigg\{ \lambda_{\ast} R_{0}^{p} + \langle \Lambda,\Sigma_{0}\rangle \\
          & \hspace{50pt} +  \int_\Xi \sup_{\xi\in\Xi} [\Psi(\xi) -(\xi-m_{0})^{\top} \Lambda(\xi-m_{0}) -\lambda_{\ast} d^{p}(\xi,\zeta)] \bnu(d\zeta) \bigg\}\\
          \geq& \inf_{\Lambda\succeq 0} \bigg\{ \lambda_{\ast} R_{0}^{p} + \langle \Lambda,\Sigma_{0}\rangle \\
          &\hspace{5pt} + \sup_{\{\bgamma_\zeta\}_\zeta\subset\cP(\Xi)} \int_{\Xi^{2}} [\Psi(\xi) -(\xi-m_{0})^{\top} \Lambda(\xi-m_{0}) -\lambda_{\ast} d^{p}(\xi,\zeta)] \bgamma_\zeta(d\xi)\bnu(d\zeta) \bigg\}\\
          \geq & \sup_{\{\bgamma_\zeta\}_\zeta\subset\cP(\Xi)} \inf_{\Lambda\succeq 0} \bigg\{\lambda_{\ast} R_{0}^{p} + \langle \Lambda,\Sigma_{0}\rangle \\
          &\hspace{45pt} + \int_{\Xi^{2}} [\Psi(\xi) -(\xi-m_{0})^{\top} \Lambda(\xi-m_{0}) -\lambda_{\ast} d^{p}(\xi,\zeta)] \bgamma_\zeta(d\xi)\bnu(d\zeta) \bigg\}\\
          =& \sup_{\bgamma\in\cP(\Xi^{2})} \bigg\{\lambda_{\ast} R_{0}^{p} + \int_\Xi [\Psi(\xi) -\lambda_{\ast} d^{p}(\xi,\zeta)] \bgamma(d\xi,d\zeta):\\
          &\hspace{80pt} \int_\Xi (\xi-m_{0})(\xi-m_{0})^{\top} \bgamma(d\xi,d\zeta) \preceq \Sigma_{0} \bigg\}.
          \end{aligned}
        \end{equation}
        Note that $\E_{\bnu}[(\bxi-m_{0})(\bxi-m_{0})^{\top}]\prec \Sigma_{0}$ , problem \eqref{eqn:moment_{d}ual_Gamma} satisfies the Slater condition, i.e., it is strictly feasible at $\bgamma_{0}$ defined by $\bgamma_{0}(A):=\bnu\{\zeta:(\zeta,\zeta)\in A\}$ for all Borel set $A\subset\Xi^{2}$, thus strong duality results for moment problem (cf. \cite{shapiro2001duality,delage2010distributionally}) implies the existence of a dual minimizer $\Lambda_{\ast}$ of problem \eqref{eqn:moment_{p}rimal_Gamma}. Therefore we have shown the existence of a dual minimizer $(\lambda_{\ast},\Lambda_{\ast})$.

      \item We then establish the first-order optimality condition of the dual problem. By Lemma \ref{lemma:random_lsc}, the function
      \[
        \Phi(\lambda,\Lambda,\zeta)= \sup_{\xi\in\Xi}[\Psi(\xi) - (\xi-m_{0})^{\top}\Lambda (\xi-m_{0}) - \lambda d^{p}(\xi,\zeta)]
      \]
      is a normal integrand. Moreover, for all $\zeta\in\Xi$, $\Phi(\cdot,\cdot,\zeta)$ is a convex function on $\R\times\R^{K\times K}$, and the growth rate condition implies that the set of maximizers is non-empty and compact for all $\lambda>\kappa$. Then by generalized Moreau-Rockafellar theorem (see, e.g., Theorem 7.47 in \cite{shapiro2009lectures}), for any $(\lambda,\Lambda)\in \mathrm{dom}\ h \cap ((\kappa,\infty)\times S_+^{K})$, it holds that
      \[
        \partial h(\lambda,\Lambda) = \left[\begin{array}{c}R_{0}^{p}\\\Sigma_{0}\end{array}\right] - \int_\Xi \partial \Phi(\lambda,\Lambda,\zeta) \bnu(d\zeta) + \mathcal{N}(\lambda,\Lambda),
      \]
      where $\mathcal{N}(\lambda,\Lambda)$ stands for the normal cone at $(\lambda,\Lambda)$ to the feasible region $\R_+\times S^{K}_+$. It follows from Theorem 2.4.18 in \cite{zalinescu2002convex} that for any $(\lambda,\Lambda)\in \mathrm{dom}\ h \cap ((\kappa,\infty)\times S_+^{K})$,
      \[\begin{aligned}
        &\partial \Phi(\lambda,\Lambda,\zeta) =  \conv\bigg\{\bigg[\begin{array}{c} d^{p}(\xi,\zeta)\\(\xi-m_{0})^{\top}\Lambda(\xi-m_{0})\end{array}\bigg]:\\ 
        & \hspace{90pt}\xi\in\argmax_{\xi\in\Xi}[\Psi(\xi) - (\xi-m_{0})^{\top}\Lambda (\xi-m_{0}) - \lambda d^{p}(\xi,\zeta)]\bigg\}.
        \end{aligned}
      \]
      Set
      \[
        T_\lambda(\zeta):= \argmax_{\xi\in\Xi} \ [\Psi(\xi) - (\xi-m_{0})^{\top}\Lambda_{\ast} (\xi-m_{0}) - \lambda d^{p}(\xi,\zeta)].
      \]
      The first-order optimality condition $0\in\partial h(\lambda_{\ast},\Lambda_{\ast})$ implies that there exists $\Sigma_{\ast}\in S^{K}_+$ with $\Sigma_{\ast}\preceq \Sigma_{0}$ and $\Lambda_{\ast}(\Sigma_{0}-\Sigma_{\ast})=0$, such that if $\lambda_{\ast}>\kappa$, it holds that
      \begin{equation}\label{eqn:moment_optimality}
        \left[\begin{array}{c} R_{0}^{p}\\ \Sigma_{\ast}\end{array}\right] \in \int_\Xi \conv\left\{\left[\begin{array}{c} d^{p}(\xi(\zeta),\zeta)\\ (\xi(\zeta)-m_{0})^{\top}\Lambda_{\ast}(\xi(\zeta)-m_{0})\end{array}\right]:\ \xi(\zeta)\in T_{\lambda_{\ast}}(\zeta)\right\}\bnu(d\zeta),
      \end{equation}
      and if $\lambda_{\ast}=\kappa$, for any $\lambda>\kappa$, there exists $R_\lambda \leq R_{0}$ such that
      \begin{equation}\label{eqn:moment_optimality0}
        \left[\begin{array}{c} R_\lambda^{p}\\ \Sigma_{\ast}\end{array}\right] \in \int_\Xi \conv\left\{\left[\begin{array}{c} d^{p}(\xi(\zeta),\zeta)\\ (\xi(\zeta)-m_{0})^{\top}\Lambda_{\ast}(\xi(\zeta)-m_{0})\end{array}\right]:\ \xi(\zeta)\in T_{\lambda}(\zeta)\right\}\bnu(d\zeta).
      \end{equation}

      \item Finally, we construct a primal (approximate) optimal solution.
      Let us first consider the case $\lambda_{\ast}>\kappa$. Similar to the argument in Step 4 of proof for Theorem \ref{thm:copula}, \eqref{eqn:moment_optimality} suggests that there exists a probability kernel $\{\bgamma^\ast_\zeta\}_{\zeta\in\Xi}$ such that each $\bgamma^\ast_\zeta$ is a probability distribution on $T_{\lambda_{\ast}}(\zeta)$, and satisfies
      \begin{subequations}
        \begin{align}
          \int_{\Xi^{2}} d^{p}(\xi,\zeta) \bgamma_\zeta^\ast(d\xi)\bnu(d\zeta)&=R_{0}^{p},\label{eqn:moment_{k}kt-a}\\
          \int_{\Xi^{2}} (\xi-m_{0})(\xi-m_{0})^{\top} \bgamma_\zeta^\ast(d\xi)\bnu(d\zeta)&=\Sigma_{\ast},\label{eqn:moment_{k}kt-b}\\
          \Sigma_{\ast} &\preceq \Sigma_{0},\label{eqn:moment_{k}kt-c}\\
          \Lambda_{\ast}(\Sigma_{0}-\Sigma_{\ast})&=0.\label{eqn:moment_{k}kt-d}
        \end{align}
      \end{subequations}
      Now define a probability measure $\bmu_{\ast}$ by
      \[
        \bmu_{\ast}(A):= \int_\Xi \bgamma^\ast_\zeta(A) \bnu(d\zeta),\ \forall A \in \mathscr{B}(\Xi).
      \]
      Then $\bmu_{\ast}$ is a primal feasible solution due to \eqref{eqn:moment_{k}kt-a}. In addition,
      \[\begin{aligned}
        &\int_\Xi \Psi(\xi) \bmu_{\ast}(d\xi)\\
         = & \int_{\Xi^{2}} \Psi(\xi)\bgamma_\zeta^\ast(d\xi)\bnu(d\zeta)\\
        = & \! \int_{\Xi^{2}} \! [\Psi(\xi)\!-\! (\xi-m_{0})^{\top} \Lambda_{\ast}(\xi-m_{0}) \!-\! \lambda_{\ast} d^{p}(\xi,\zeta)] \bgamma_\zeta^\ast(d\xi)\bnu(d\zeta) \! + \lambda_{\ast} R_{0}^{p} \! + \! \langle \Lambda_{\ast}, \Sigma_{\ast} \rangle \\
        = & \int_\Xi\max_{\xi\in\Xi}[\Psi(\xi)-(\xi-m_{0})^{\top} \Lambda_{\ast}(\xi-m_{0}) -\lambda_{\ast} d^{p}(\xi,\zeta)] \bnu(d\zeta) + \lambda_{\ast} R_{0}^{p} + \langle \Lambda_{\ast}, \Sigma_{0} \rangle\\
        \geq & v_{d},
        \end{aligned}
      \]
      where the second and the third equalities follows from \eqref{eqn:moment_{k}kt-a}-\eqref{eqn:moment_{k}kt-d}. 

      We then consider $\lambda_{\ast}=\kappa$. \eqref{eqn:moment_optimality0} suggests that for any $\lambda>\kappa$, there exists a probability kernel $\{\bgamma^\lambda_\zeta\}_{\zeta\in\Xi}$ such that each $\bgamma^\lambda_\zeta$ is a probability distribution on $T_{\lambda}(\zeta)$, and satisfies
      \begin{subequations}
        \begin{align}
          \int_{\Xi^{2}} d^{p}(\xi,\zeta) \bgamma_\zeta^\lambda(d\xi)\bnu(d\zeta)&=R_\lambda^{p},\label{eqn:moment_{k}kt-a0}\\
          \int_{\Xi^{2}} (\xi-m_{0})(\xi-m_{0})^{\top} \bgamma_\zeta^\ast(d\xi)\bnu(d\zeta)&=\Sigma_{\ast},\label{eqn:moment_{k}kt-b0}\\
          \Sigma_{\ast} &\preceq \Sigma_{0},\label{eqn:moment_{k}kt-c0}\\
          \Lambda_{\ast}(\Sigma_{0}-\Sigma_{\ast})&=0.\label{eqn:moment_{k}kt-d0}
        \end{align}
      \end{subequations}
      Now define a sequence of probabilty measures $\{\bmu^\lambda\}$ by
      \[
        \bmu^\lambda(A):= \int_\Xi \bgamma^\lambda_\zeta(A) \bnu(d\zeta),\ \forall A \in \mathscr{B}(\Xi).
      \]
      Then $\bmu^\lambda$ is a primal feasible solution due to \eqref{eqn:moment_{k}kt-a0}. In addition, from \eqref{eqn:moment_{k}kt-a0}-\eqref{eqn:moment_{k}kt-d0}
      \[\begin{aligned}
        &\int_\Xi \Psi(\xi) \bmu^\lambda(d\xi)\\
         = & \int_{\Xi^{2}} \Psi(\xi)\bgamma_\zeta^\lambda(d\xi)\bnu(d\zeta)\\
        = & \int_{\Xi^{2}}\! [\Psi(\xi) \!-\! (\xi-m_{0})^{\top} \Lambda_{\ast}(\xi-m_{0}) \!-\! \lambda d^{p}(\xi,\zeta)] \bgamma_\zeta^\lambda(d\xi)\bnu(d\zeta)  +  \lambda R_{0}^{p} + \langle \Lambda_{\ast}, \Sigma_{\ast} \rangle \\
        = & \int_\Xi\max_{\xi\in\Xi}[\Psi(\xi)-(\xi-m_{0})^{\top} \Lambda_{\ast}(\xi-m_{0}) -\lambda d^{p}(\xi,\zeta)] \bnu(d\zeta) + \lambda R_{0}^{p} + \langle \Lambda_{\ast}, \Sigma_{0} \rangle,
        \end{aligned}
      \]
      which goes to $h(\kappa,\Lambda_{\ast})$ as $\lambda\to\kappa$. Therefore, combined with Step 1, we have shown the strong duality. \qed
    \end{enumerate}
  \end{proof}

  \begin{remark}\label{rmk:concave}
    When $\Psi$ is a concave function and $\Xi$ is convex, $T_{\lambda_{\ast}}(\zeta)$ and $T_{\lambda}(\zeta)$ are convex sets, and thus the convex combination in \eqref{eqn:moment_optimality}\eqref{eqn:moment_optimality0} belong to $T_{\lambda_{\ast}}(\zeta)$ and $T_{\lambda}(\zeta)$ respectively. Thus we can replace the convex hull by a single point in $T_{\lambda_{\ast}}(\zeta)$ or $T_{\lambda}(\zeta)$. It follows that when $\Psi$ is concave and $\bnu=\frac{1}{N}\sum_{i=1}^{N} \bdelta_{\hxi^{i}}$, it is sufficient to restrict $\frakM_{1}$ on its subset $\frakM_{1}\cap\big\{\frac{1}{N}\sum_{i=1}^{N} \bdelta_{\xi^{i}}: \{\xi^{i}\}_{i=1}^{N}\subset\Xi\big\}$, the set of distributions in $\frakM_{1}$ that are supported on at most $N$ points:
    \[
      \max_{\xi^{i}}\bigg\{ \frac{1}{N}\sum_{i=1}^{N} \Psi(\xi^{i}):\ 
       \frac{1}{N}\sum_{i=1}^{N} d^{p}(\xi^{i},\hxi^{i})\leq R_{0}^{p},\  \frac{1}{N}\sum_{i=1}^{N} (\xi^{i}-m_{0})(\xi^{i}-m_{0})^{\top} \preceq \Sigma_{0}\bigg\}.
    \]
    \qed
  \end{remark}

  For piecewise linear convex objective function $\displaystyle \Psi(\xi)=\max_{1\leq j\leq J} a_j^{\top} \xi+b_j$ and empirical nominal distribution, we have the following result.
  \begin{corollary}\label{cor:moment}
    Suppose $\displaystyle \Psi(\xi)=\max_{1\leq j\leq J} a_j^{\top} \xi+b_j$, $\bnu = \frac{1}{N}\sum_{i=1}^{N} \bdelta_{\hxi^{i}}$, $p=1$ and $d(\xi,\zeta)=||\xi-\zeta||$. Then the DRSO problem admits a semi-definite program reformulation:
    \[
      \begin{aligned}
      \min_{\substack{y^{i}\in\R\\\lambda\geq0,\Lambda\succeq 0}} & \ \  \lambda R_{0} + \langle \Lambda,\Sigma_{0} \rangle + \frac{1}{N}\sum_{i=1}^{N} y_{i}\\
      s.t. \ \  &
      \left[\begin{array}{cc} \Lambda & -\frac{1}{2} a_j + \frac{1}{2}\zeta^{i} - \Lambda m_{0} \\ (-\frac{1}{2} a_j + \frac{1}{2}\zeta^{i} - \Lambda m_{0})^{\top} & - b_j-{\zeta^{i}}^{\top}\hxi^{i} + m_{0}^{\top}\Lambda m_{0}+y_{i} \end{array}\right]\!\succeq\!0,\\
      &\hspace{140pt} \forall 1\leq j\leq J,\ \forall 1\leq i\leq N,\\
      &  ||\zeta^{i}||_{\ast} \leq \lambda,\quad \forall 1\leq i\leq N,
    \end{aligned}
    \]
    where $||\cdot||_{\ast}$ denotes the dual norm of $||\cdot||$. 
  \end{corollary}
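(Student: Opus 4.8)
The plan is to specialize the strong dual in Theorem~\ref{thm:moment} to the present data and objective, and then rewrite every piece of the resulting finite-dimensional convex program as linear matrix inequalities. The hypotheses of Theorem~\ref{thm:moment} hold here: a piecewise-linear convex $\Psi(\xi)=\max_{1\le j\le J}a_j^\top\xi+b_j$ is upper semicontinuous and Lipschitz (with constant $\max_j\|a_j\|_\ast$), so its growth rate $\kappa$ is finite. Taking $p=1$, $d(\xi,\zeta)=\|\xi-\zeta\|$, and $\bnu=\frac1N\sum_{i=1}^{N}\bdelta_{\hxi^i}$ makes the integral in the dual a finite sum, so
\[
\sup_{\bmu\in\frakM_1}\E_{\bmu}[\Psi(\bxi)]=\min_{\lambda\ge0,\ \Lambda\succeq0}\Big\{\lambda R_0+\langle\Lambda,\Sigma_0\rangle+\tfrac1N\sum_{i=1}^{N}\Phi(\lambda,\Lambda,\hxi^i)\Big\},
\]
where $\Phi(\lambda,\Lambda,\zeta)=\sup_{\xi\in\Xi}[\Psi(\xi)-\lambda\|\xi-\zeta\|-(\xi-m_0)^\top\Lambda(\xi-m_0)]$ is the integrand of Lemma~\ref{lemma:random_lsc}.

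Next I would epigraph the $N$ suprema: introduce $y_1,\dots,y_N$ with $\Phi(\lambda,\Lambda,\hxi^i)\le y_i$, and since $\Psi$ is a pointwise maximum of $J$ affine functions, each such constraint is the conjunction over $j=1,\dots,J$ of $\sup_{\xi\in\Xi}[a_j^\top\xi+b_j-\lambda\|\xi-\hxi^i\|-(\xi-m_0)^\top\Lambda(\xi-m_0)]\le y_i$. The norm is then linearized by duality: since $\|w\|=\max_{\|z\|_\ast\le1}\langle z,w\rangle$ and $\lambda\ge0$, we have $-\lambda\|\xi-\hxi^i\|=\min_{\|\zeta\|_\ast\le\lambda}\langle\zeta,\hxi^i-\xi\rangle$, so the $j$-th constraint reads $\sup_{\xi\in\Xi}\min_{\|\zeta\|_\ast\le\lambda}g_{ij}(\xi,\zeta)\le y_i$ with $g_{ij}(\xi,\zeta)=(a_j-\zeta)^\top\xi+b_j+\zeta^\top\hxi^i-(\xi-m_0)^\top\Lambda(\xi-m_0)$, which is concave in $\xi$ (affine minus a positive-semidefinite quadratic) and affine in $\zeta$ over the compact convex ball $\{\|\zeta\|_\ast\le\lambda\}$. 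By Sion's minimax theorem $\sup_\xi$ and $\min_\zeta$ may be interchanged, so the constraint is equivalent to the existence of $\zeta^i$ (associated with this pair $(i,j)$) with $\|\zeta^i\|_\ast\le\lambda$ and $\sup_{\xi\in\Xi}g_{ij}(\xi,\zeta^i)\le y_i$.

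With $\Xi=\R^K$ this last inequality says precisely that the convex quadratic $\xi\mapsto(\xi-m_0)^\top\Lambda(\xi-m_0)-(a_j-\zeta^i)^\top\xi-b_j-(\zeta^i)^\top\hxi^i+y_i$ is nonnegative on all of $\R^K$; expanding it as $\xi^\top\Lambda\xi-(2\Lambda m_0+a_j-\zeta^i)^\top\xi+(m_0^\top\Lambda m_0-b_j-(\zeta^i)^\top\hxi^i+y_i)$ and using the standard fact that $\xi^\top A\xi-2g^\top\xi+c\ge0$ for every $\xi$ if and only if the symmetric $(K{+}1)\times(K{+}1)$ matrix with leading block $A$, off-diagonal block $-g$ and trailing entry $c$ is positive semidefinite (here $A=\Lambda$ and $g=\Lambda m_0+\tfrac12 a_j-\tfrac12\zeta^i$) turns it into exactly the LMI in the statement. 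Collecting these LMIs over $1\le i\le N$, $1\le j\le J$, together with $\|\zeta^i\|_\ast\le\lambda$, $\lambda\ge0$, $\Lambda\succeq0$, and the linear objective $\lambda R_0+\langle\Lambda,\Sigma_0\rangle+\tfrac1N\sum_{i=1}^{N}y_i$, yields the claimed semidefinite program.

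The main obstacle is making the middle step an exact equivalence rather than a one-sided bound: one must justify exchanging $\sup_\xi$ with $\min_\zeta$ (which is why Sion's theorem, and not merely weak minimax, is needed) and the passage from quadratic nonnegativity to an LMI, and the latter requires the support to be all of $\R^K$ — for a strict polyhedral $\Xi$ one would need an S-procedure argument, which can be lossy. Secondary technical points to handle carefully are the boundary case where $\Lambda$ is singular (then $\Phi(\lambda,\Lambda,\hxi^i)$ may equal $+\infty$, and one should check the corresponding LMI is infeasible in the same regime) and the role of the affine-piece index $j$ in the norm dual variable; attainment of the outer minimum need not be reproved, as it is part of the strong-duality conclusion of Theorem~\ref{thm:moment}, and one should note that the linear dependence on $\|\xi-\hxi^i\|$, hence the tractable SDP form, is tied to the choice $p=1$.
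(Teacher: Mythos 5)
Your proposal is correct and follows essentially the same route as the paper: specialize the strong dual of Theorem~\ref{thm:moment} to the empirical nominal distribution, dualize the norm via $\|w\|=\max_{\|z\|_\ast\le 1}\langle z,w\rangle$, exchange the inner $\sup_\xi$ and $\min_\zeta$ (the paper invokes convex programming duality where you invoke Sion's minimax theorem), and convert global nonnegativity of the resulting convex quadratic into an LMI. Your remarks that the LMI equivalence requires $\Xi=\R^{K}$ and that the norm dual variable naturally carries the index pair $(i,j)$ rather than $i$ alone are both correct and are glossed over in the paper's own argument.
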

  \begin{proof}
      Observe that $||\xi-\hxi^{i}||=\sup_{||\zeta||_{\ast}\leq1} \zeta^{\top}(\xi-\hxi^{i})$, then by convex programming duality, we have that
      \[\begin{aligned}
        &\max_{\xi\in\Xi} \Big\{a_j^{\top}\xi + b_j - \lambda||\xi-\hxi^{i}|| - (\xi-m_{0})^{\top}\Lambda(\xi-m_{0})\Big\}\\
        =& \max_{\xi\in\Xi}  \inf_{||\zeta||_{\ast}\leq 1} \Big\{ a_j^{\top}\xi + b_j - \lambda\zeta^{\top}(\xi-\hxi^{i}) - (\xi-m_{0})^{\top}\Lambda(\xi-m_{0})\Big\}\\
        =&  \inf_{||\zeta||_{\ast}\leq 1} \max_{\xi\in\Xi}  \Big\{  \sum_{j}\alpha_{ij} (a_j^{\top}\xi  + b_j) - \lambda\zeta^{\top}(\xi-\hxi^{i}) - (\xi - m_{0})^{\top}\Lambda(\xi - m_{0})\Big\}.
        \end{aligned}
      \]
      Hence the constraint
      \[
        y_{i} \geq \max_{\xi\in\Xi} \Big\{a_j^{\top}\xi + b_j - \lambda||\xi-\hxi^{i}|| - (\xi-m_{0})^{\top}\Lambda(\xi-m_{0})\Big\}
      \]
      can be written as
      \[
        \exists ||\zeta||_{\ast} \leq 1, \ s.t.\
        y_{i} \geq \max_{\xi\in\Xi} \Big\{a_j^{\top}\xi + b_j - \lambda\zeta^{\top}(\xi-\hxi^{i}) - (\xi-m_{0})^{\top}\Lambda(\xi-m_{0})\Big\},
      \]
      which is further equivalent to
      \[
        \exists ||\zeta||_{\ast}\leq 1,\  s.t.\ \left[\begin{array}{cc} \Lambda & -\frac{1}{2} a_j + \frac{1}{2}\lambda\zeta - \Lambda m_{0} \\ (-\frac{1}{2}a_j + \frac{1}{2}\lambda\zeta - \Lambda m_{0})^{\top}  & -b_j-\lambda\zeta^{\top}\hxi^{i} + m_{0}^{\top}\Lambda m_{0}+y_{i} \end{array}\right]\succeq0.
      \]
      Replacing $\lambda\zeta$ by $\zeta^{i}$ we obtain the result. \qed
    \end{proof}

  \subsection{Application in portfolio optimization}\label{sec:portfolio}
    In this section, we study the effect of degree of correlation on the performance of \eqref{problem:moment}, through a mean-risk portfolio optimization problem adapted from \cite{esfahani2015data}. In this problem, the random returns of $K$ risky assets is captured by a random vector $\bxi=(\bxi_{1},\ldots,\bxi_{k})$, the decision variable $x\in X=\{x\in\R_+^{K}:\ \sum_{k=1}^{K} x_{k}=1\}$ represents the portfolio weights without short-selling, and the goal is to minimize a weighted combination of the mean and conditional value-at-risk of the negative of portfolio return $-x^{\top}\bxi$:
    \[
      \min_{x\in X}\sup_{\bmu\in\frakM_{1}}\bigg\{ \E_{\bmu}[-x^{\top}\bxi] + c\cdot \mathrm{CVaR}^{\bmu}_{\alpha}(-x^{\top}\bxi) \bigg\},
    \]
    where $c$ is some constant. 
    It can be written as (see \cite{esfahani2015data} for a derivation)
    \begin{equation}\label{problem:portfolio}
      \inf_{x\in X,\tau\in\R}\sup_{\bmu\in\frakM_{1}}\bigg\{ \max_{1\leq j\leq J} a_jx^{\top}\bxi + b_j\tau\bigg\},
    \end{equation}
    where $J=2$, $a_{1}=-1$, $a_{2}=-1-c/\alpha$, $b_{1}=c$ and $b_{2}=c(1-1/\alpha)$.
    Suppose the nominal distribution $\bnu$ is chosen as the empirical distribution, then we can obtain a semi-definite programming reformulation using Corollary \ref{cor:moment}. In defining $\frakM_{1}$, we set $m_{0}$ to be the sample mean vector, and $\Sigma_{0}$ to be sample covariance matrix $\hSigma$ inflated by a factor $\gamma\geq1$.

    The parameters for the numerical experiments are given as follows. We set $K=10$, $c=10$, $\alpha=20\%$. We consider Wasserstein distance of order $p=1$ and $\Xi$ to be the Euclidean space $\R^{K}$.
    Assume $\bxi$ is joint Gaussian distributed, and each $\bxi_{k}$ has mean $0.03k$ and standard deviation $0.025k$, $k=1,\ldots,K$. The correlation between $\bxi_{k}$ and $\bxi_j$ is set to be $\rho^{|k-j|}$ for $\rho=0.5,0.6,0.7,0.8,0.9,0.99$.
    Note that in $0.9^5\simeq 0.69$ and $0.6^5\simeq0.08$, we can view $\rho=0.99,0.9$ as high correlation regime, $\rho=0.8,0.7$ as medium correlation regime, and $\rho=0.6,0.5$ as low correlation regime.
    We consider small dataset regime $N=40$, for which DRSO approach should be more suitable than SAA (sample average approximation) method. We run the simulation with 200 repetitions.

The tuning parameters (Wasserstein radius $R_{0}$ and inflation factor $\gamma$) are selected using hold-out cross validation.
    In each repetition, the $N$ samples are randomly partitioned into a training dataset (70\% of the data) and a validation dataset (the remaining 30\%).
    For different tuning parameters, we use the training dataset to solve problem \eqref{problem:portfolio} and use the validation dataset to estimate the out-of-sample performance of different parameter values and select the ones with the best performance.
    Then we resolve problem \eqref{problem:portfolio} with the best tuning parameters using all $N$ samples and obtain the optimal solutions for the three uncertainty sets. Finally we examine the out-of-sample performance of these solutions using an independent testing dataset consisting of $10^3$ samples.

    \begin{figure}[!t]
      \centering
      \includegraphics[width=\textwidth]{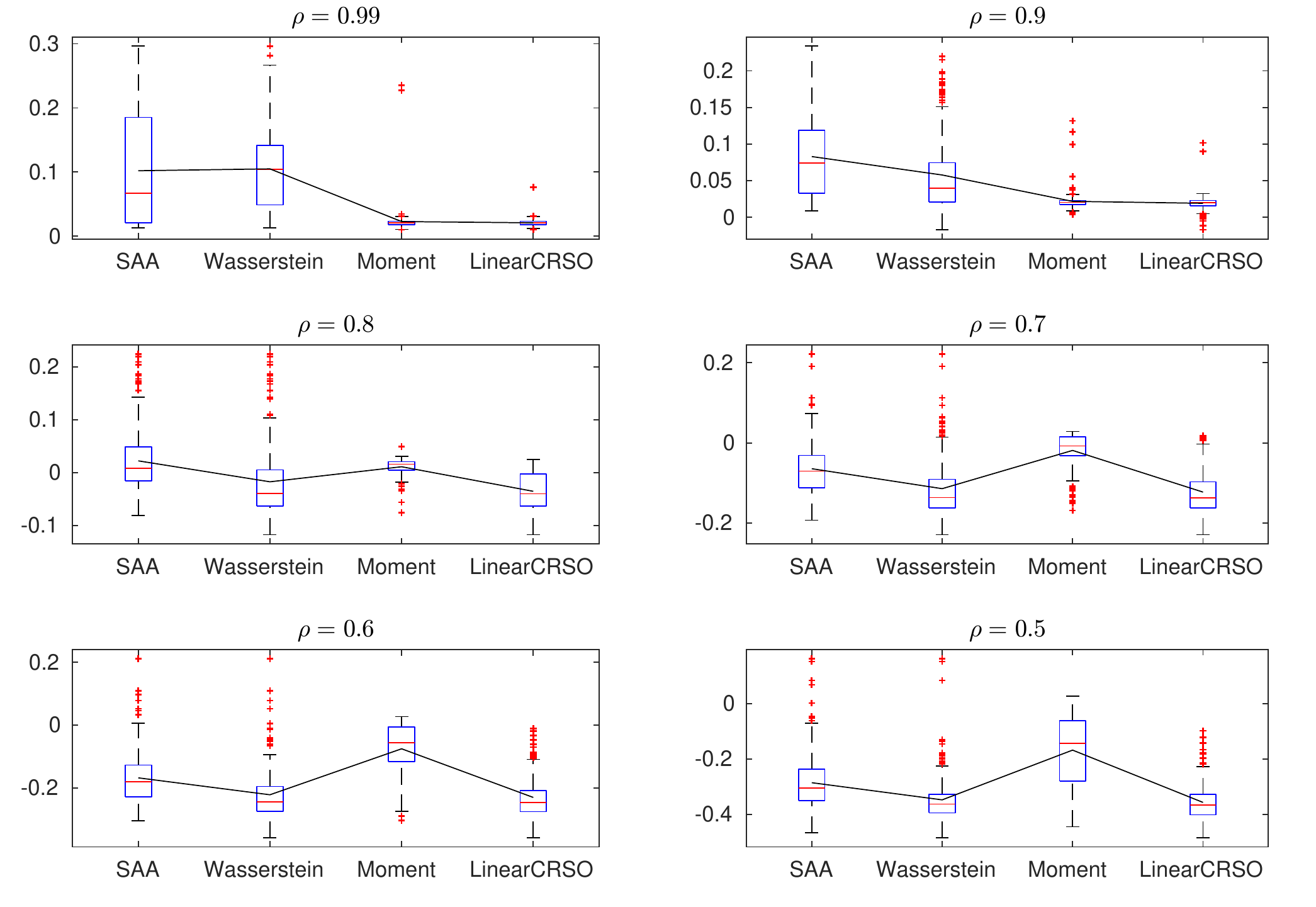}
      \caption{Out-of-sample performance under different degrees of correlation}\label{fig:portfolio}
    \end{figure}

    Figure \ref{fig:portfolio} shows the box plots of the optimal values in 200 repetitions of four different approaches: Sample Average Approximation, DRSO with Wasserstein uncertainty set, DRSO with moment uncertainty set and \eqref{problem:moment}, and the solid curves represent the average performance of these approaches.
    We observe that our new formulation \eqref{problem:moment} performs consistently the best in all regimes; DRSO with moment uncertainty set \eqref{eqn:delage} performs well in high-correlation regime; and DRSO with Wasserstein uncertainty set performs well in medium- and low-correlation regime.

    To provide a rough explanation of the results, we begin with describing the behavior of the worst-case distributions in three approaches.
    Observe that the objective of problem \eqref{problem:portfolio} is a weighted combination of mean and CVaR and is piecewise-linear convex in $\xi$. To achieve the inner maximization, the worst-case distributions of Wasserstein uncertainty set and \eqref{problem:moment} tend to perturb the data points with small returns (large value of $-x^{\top}\xi$) in the direction of minimizing $-x^{\top}\xi$. Meanwhile, for moment uncertainty set, according to Proposition 3 in \cite{delage2010distributionally}, the worst-case distribution also tends to spread out the probability mass towards this direction until the centered second-order moment constraint becomes tight.
    
    Now let us consider the situations in different correlation regimes.
    In high-correlation regime, the optimal portfolio under true underlying distribution puts almost all the weight on the first asset, which has the smallest weighted combination of mean and CVaR among all $K$ assets. 
    Since the expected returns are almost comonotonic, DRSO with moment uncertainty set becomes finding a worst-case distribution among all univariate distributions with given mean and variance.
    This is a relatively small set of distributions, and considering the underlying distribution is Gaussian, the solution yielding from moment uncertainty set is not overly conservative, and can effectively identify a portfolio close to the true optimal one.
    On the other hand, for Wasserstein uncertainty set, the worst-case distribution perturbs the data with small returns in the direction of minimizing $-x^{\top}\xi$. 
    Note that in the presence of high correlation, data points are concentrating on a subspace of $\R^{K}$ whose dimension is much less than $K$, and they can be easily perturbed out of this subspace. 
    Therefore the worst-case distribution may not lie in the same subspace as the empirical distribution.
    Hence the Wasserstein uncertainty set may hedge against some distributions unlikely to happen, which makes the decision over-conservative. 
    Indeed, the solution yielding from Wasserstein uncertainty set tends to equally allocate the weights to all assets.
    In medium- and low-correlation regime, the data points are not so concentrating, so perturbation towards any direction does not affect the correlation structure too much, hence Wasserstein constraint alone performs well, and almost as good as \eqref{problem:moment} in low-correlation regime. But moment uncertainty set is too conservative since it contains too many distributions. As a hybrid approach, \eqref{problem:moment} take advantages of Wasserstein and moment uncertainty sets.

\section{Rank-correlationally robust stochastic optimization}\label{sec:copulaCRSO}
  In this section, we study rank-correlationally robust stochastic optimization problem \eqref{problem:copula}. Dual reformulation of the problem is derived in \ref{sec:copulaCRSO_{d}ual}, and comparison of \eqref{problem:copula} with other approaches are in \ref{sec:compare_linear_copula} and \ref{sec:compare_wasserstein}.
  As in previous section, we suppress $x$ in $\Psi(x,\xi)$. Throughout this section, we assume $p\in[1,\infty)$, $q\in[1,\infty]$, $\Psi$ is upper semi-continuous, and satisfies the growth rate condition
  \[
    \limsup_{d(\xi,\zeta_{0})\to\infty}\frac{\max\big(\Psi(\xi)-\Psi(\zeta_{0}),0\big)}{d^{p}(\xi,\zeta_{0})} =0,
  \]
  for some $\zeta_{0}\in\Xi$.

  \subsection{Dual reformulation of \eqref{problem:copula}}\label{sec:copulaCRSO_{d}ual}
    We consider $q\in[1,\infty)$ in Theorem \ref{thm:copula} and $q=\infty$ in Theorem \ref{thm:q=infty}. 

    % Let $\frakF_{k}$ be the set of distribution functions from $[0,1]$ to $\Xi_{k}$. Set
    % \[
    %   F(\xi) := \big(F_{1}(\xi_{1}),\ldots,F_{k}(\xi_{k})\big),\quad \forall \xi\in\Xi.
    % \]
    % Note that $F$ is not the distribution function $H$ of $\bxi$ unless the components of $\bxi$ are mutually independent. 
    
    \begin{theorem}\label{thm:copula}
      Suppose $1\leq q<\infty$.
      \begin{enumerate}[label=(\roman*)]
        \item \label{itm:dual<infty} Problem \eqref{problem:copula} can be reformulated as
          \begin{equation}\label{eqn:dual<infty}
            %\sup_{F_{k}\in\frakF_{k},\forall k}  
            \min_{\alpha,\beta\geq0} \bigg\{ \alpha R_{0}^{p} + \beta r_{0}^{q} +  \int_{\Xi} \max_{\xi\in\Xi}  \big[\Psi(\xi) -  \alpha d^{p}(\xi,\zeta)  - \beta \sfd_{F}^{q}(\xi,\zeta)\big] \bnu(d\zeta)  \bigg\}.
          \end{equation}
        \item \label{itm:approx<infty} For data-driven problem, i.e., $\bnu=\frac{1}{N}\sum_{i=1}^{N} \bdelta_{\hxi^{i}}$, the following program
          \begin{equation}\label{eqn:data-driven<infty}
            \begin{aligned}
              \max_{\{\xi^{i}\}\subset\Xi} \bigg\{ & \frac{1}{N}\sum_{i=1}^{N} \Psi(\xi^{i}):\ \frac{1}{N}\sum_{i=1}^{N} d^{p}(\xi^{i},\hxi^{i})\leq R_{0}^{p},\  \frac{1}{N}\sum_{i=1}^{N} \sfd_{F}^{q}(\xi^{i},\hxi^{i}) \leq r_{0}^{q}\bigg\}
            \end{aligned}
          \end{equation}
          is a $(1-O(\frac{1}{N}))$-approximation of the inner maximization of \eqref{problem:copula}.
      \end{enumerate}
    \end{theorem}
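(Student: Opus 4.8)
The plan is to reuse the four‑step architecture of the proof of Theorem~\ref{thm:moment} — weak duality, dual attainment, first‑order conditions, primal reconstruction — now carrying \emph{two} Lagrange multipliers $\alpha,\beta\ge 0$, one for each of the two transport‑type budgets $W_p(\bmu,\bnu)\le R_0$ and $\sfW_q(\bmu,\bnu)\le r_0$. First, for weak duality, I would disintegrate a coupling of $\bmu$ and $\bnu$ with respect to $\bnu$ and rewrite $\sup_{\bmu\in\frakM_2}\E_\bmu[\Psi]$ as a supremum over probability kernels $\{\bgamma_\zeta\}_{\zeta\in\Xi}$, imposing the two budgets $\int_{\Xi^2} d^p(\xi,\zeta)\,\bgamma_\zeta(d\xi)\bnu(d\zeta)\le R_0^p$ and $\int_{\Xi^2}\sfd_F^q(\xi,\zeta)\,\bgamma_\zeta(d\xi)\bnu(d\zeta)\le r_0^q$ \emph{on the same kernel}; dualizing both budgets and interchanging the supremum over $\{\bgamma_\zeta\}$ with the $\bnu$‑integral then yields the ``$\le$'' half of \eqref{eqn:dual<infty}. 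The subtle point here, which has no analogue in Theorem~\ref{thm:moment} (where the second constraint was a moment constraint that depends only on the marginal $\bmu$ and so holds for every coupling), is that here \emph{both} constraints are transport distances, so putting both budgets on one coupling is not for free: it needs that some near‑worst‑case $\bmu$ admit a single joint coupling realizing both distance bounds. I would establish this in the data‑driven and, more generally, one‑dimensional settings from the fact that $d^p$ and $\sfd_F^q$ each satisfy the Monge (submodularity) condition, so the comonotone coupling is simultaneously optimal for both, and in general note that the worst‑case distribution produced in the reconstruction step automatically has this property.

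For dual attainment, let $h(\alpha,\beta)$ be the dual objective. The growth condition — here with limit $0$ — gives, for every $\alpha>0$, that $\sup_\xi[\Psi(\xi)-\alpha d^p(\xi,\zeta)-\beta\sfd_F^q(\xi,\zeta)]$ is finite and attained on a bounded set, using also that $\sfd_F$ is bounded on $\Xi^2$ since $\sfd$ is a metric on the compact cube $[0,1]^K$ (so the extra term $-\beta\sfd_F^q$ never destroys coercivity in $\xi$). Measurability of the integrand follows from a normal‑integrand lemma identical to Lemma~\ref{lemma:random_lsc}, now for $\Phi(\alpha,\beta,\zeta):=\sup_\xi[\Psi(\xi)-\alpha d^p(\xi,\zeta)-\beta\sfd_F^q(\xi,\zeta)]$. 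The bound $h(\alpha,\beta)\ge \alpha R_0^p+\beta r_0^q+\int_\Xi\Psi(\zeta)\bnu(d\zeta)$ shows $h$ is coercive, so I can restrict to a compact set of $(\alpha,\beta)$ and extract a minimizer $(\alpha_\ast,\beta_\ast)$.

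For the first‑order conditions and primal reconstruction, on $\{\alpha>\kappa=0\}$ the generalized Moreau--Rockafellar theorem gives $\partial h(\alpha,\beta)=[R_0^p;\,r_0^q]-\int_\Xi\partial\Phi(\alpha,\beta,\zeta)\bnu(d\zeta)+\mathcal N(\alpha,\beta)$ with $\partial\Phi(\alpha,\beta,\zeta)=\conv\{[d^p(\xi,\zeta);\,\sfd_F^q(\xi,\zeta)]:\xi\in\argmax_\xi[\Psi(\xi)-\alpha d^p(\xi,\zeta)-\beta\sfd_F^q(\xi,\zeta)]\}$, so $0\in\partial h(\alpha_\ast,\beta_\ast)$ yields a measurable kernel $\{\bgamma^\ast_\zeta\}$ with $\bgamma^\ast_\zeta$ supported on the argmax set and with $\int d^p\bgamma^\ast_\zeta\bnu= R_0^p$ when $\alpha_\ast>0$ ($\le R_0^p$ with complementary slackness otherwise), and likewise for $\sfd_F^q$ and $r_0^q$; then $\bmu_\ast:=\int\bgamma^\ast_\zeta\bnu(d\zeta)$ is primal feasible with $\E_{\bmu_\ast}[\Psi]$ equal to the dual value, giving strong duality and part~\ref{itm:dual<infty}. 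The boundary case $\alpha_\ast=\kappa=0$ is handled exactly as in Theorem~\ref{thm:moment} by taking $\alpha\downarrow 0$ and using upper semicontinuity.

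For part~\ref{itm:approx<infty}, specialize the reconstruction to $\bnu=\frac1N\sum_i\bdelta_{\hxi^i}$: the kernel $\{\bgamma^\ast_i\}$ solves a finite feasibility system made of the ``$\bgamma^\ast_i$ is a probability measure on $\argmax(\hxi^i)$'' constraints plus the two linking equalities, so an extreme solution has all but at most two of the $\bgamma^\ast_i$ equal to Dirac masses; hence $\bmu_\ast$ is supported on at most $N+2$ points, with at most two atoms $\hxi^i$ ``split'' into two. To pass to \eqref{eqn:data-driven<infty} I would keep $\xi^i$ at the unique support point for the non‑split atoms and, for each split atom, keep whichever of the two support points does not decrease the objective; since both support points lie in the bounded argmax set, this perturbs each transport budget by only $O(1/N)$ and decreases the objective by at most $\alpha_\ast\cdot O(1/N)+\beta_\ast\cdot O(1/N)$, and restoring exact feasibility by an $O(1/N)$ contraction of the $\xi^i$ toward the $\hxi^i$ costs another $O(1/N)$ by upper semicontinuity; dividing by the inner optimal value, which is at least $\E_\bnu[\Psi]=\Theta(1)$, converts the additive $O(1/N)$ gap into the multiplicative $(1-O(1/N))$ factor. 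The hardest parts, I expect, will be the single‑coupling reduction in Step~1 (forcing both transport budgets onto one coupling) and the extreme‑point/Carath\'eodory bookkeeping of part~\ref{itm:approx<infty} together with the care needed to keep the collapsed worst‑case distribution feasible while losing only $O(1/N)$.
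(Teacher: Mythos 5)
Your proposal follows the paper's proof essentially step for step: disintegration plus Lagrangian weak duality, coercivity of the dual objective $h(\alpha,\beta)$ to extract a minimizer, the generalized Moreau--Rockafellar subdifferential formula (with a normal-integrand lemma as in Lemma~\ref{lemma:random_lsc}) to obtain complementary-slackness conditions, and reconstruction of a primal optimizer from a kernel supported on the argmax sets. For part~\ref{itm:approx<infty} your extreme-point count (all but at most two of the $\bgamma^{\ast}_{i}$ are Diracs, because there are only two linking scalar constraints) is an acceptable substitute for the paper's Carath\'eodory-plus-continuous-knapsack argument, and both yield the same $O(1/N)$ loss; note, though, that the paper avoids your ``contraction to restore feasibility'' step by simply returning the discarded fractional mass to the original atoms $\hxi^{i}$, which costs nothing in either budget, and that your conversion of the additive $O(1/N)$ error into a multiplicative factor assumes the optimal value is bounded away from zero, which is not given.

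The one place you genuinely depart from the paper is the single-coupling issue in Step~1, and there your diagnosis is sharper than your cure. You are right that, unlike in Theorem~\ref{thm:moment}, both constraints defining $\frakM_{2}$ are transport distances, each an infimum over its \emph{own} coupling, so imposing both budgets on one kernel a priori shrinks the feasible set: the displayed ``equality'' opening the weak-duality chain holds only as ``$\geq$'', while the chain needs ``$\leq$''. However, your proposed patches do not close this. The Monge/submodularity argument makes one coupling simultaneously optimal for $d^{p}$ and $\sfd_{F}^{q}$ essentially only in dimension one (for $K\geq 2$, or for $\sfd=\|\cdot\|_{\infty}$, the two costs need not share an optimal coupling), and the observation that the reconstructed worst case admits a joint coupling only proves that the \emph{restricted} (single-coupling) primal attains the dual value --- it does not show that every $\bmu\in\frakM_{2}$ has objective at most $\inf_{\alpha,\beta}h$. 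The paper's own proof asserts this equality without justification, so you have not introduced a new gap, but as written your argument (like the paper's) establishes \eqref{eqn:dual<infty} only for the ambiguity set in which a single coupling certifies both budgets; to get the stated claim for $\frakM_{2}$ you would need a direct proof that $\sup_{\bmu\in\frakM_{2}}\E_{\bmu}[\Psi]\leq\inf_{\alpha,\beta\geq 0}h(\alpha,\beta)$, which requires handling two distinct couplings under one integrand, or a restatement of the theorem for the single-coupling set.
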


    \begin{remark}
      In Section \ref{sec:intro:copulaCRSO}, we have showed various examples regrading different notions of distance by assuming the relevant distribution $\bmu$ is supported on (at most) $N$ points.
      Thanks to statement \ref{itm:approx<infty} (and also Theorem \ref{thm:q=infty}\ref{itm:data-driven} below), for data-driven problems, this assumption is almost not restrictive at all, since the worst-case distribution of the approximation problem \eqref{eqn:data-driven<infty} is supported on at most $N$ points. 
      \qed
    \end{remark}

    \begin{proof}[Proof of Theorem \ref{thm:copula}]
      We first prove \ref{itm:dual<infty}. The idea is similar to the proof of Theorem \ref{thm:moment}. The measurability of the integrand in the dual problem follows similarly to Lemma \ref{lemma:random_lsc}, so we omit the proof. Using Lagrangian weak duality we have weak duality
      \[
        \begin{aligned}
        &\sup_{\bmu\in\frakM_{2}} \bigg\{\int_{\Xi} \Psi(\xi) \bmu(d\xi)\bigg\}\\
        = & \sup_{\{\bgamma_\zeta\}_\zeta\subset\cP(\Xi)} \inf_{\alpha,\beta\geq0} \bigg\{\int_{\Xi^{2}} \Psi(\xi) \bgamma_\zeta(d\zeta)\bnu(d\zeta) + \alpha R_{0}^{p} - \alpha\int_{\Xi^{2}} d^{q}(\xi,\zeta) \bgamma_\zeta(d\xi)\bnu(d\zeta) \\
         & \hspace{170pt}  + \beta r_{0}^{q} - \beta \int_{\Xi^{2}} \sfd_{F}^{q}(\xi,\zeta) \bgamma_\zeta(d\xi)\bnu(d\zeta) \bigg\}\\
        \leq & \inf_{\alpha,\beta\geq0} \bigg\{ \alpha R_{0}^{p} + \beta r_{0}^{q} +  \int_{\Xi}\max_{\xi\in\Xi}  \big[\Psi(\xi) -  \alpha d^{p}(\xi,\zeta)  - \beta \sfd_{F}^{q}(\xi,\zeta)\big] \bnu(d\zeta) \bigg\}.
        \end{aligned}
      \]

      We next show that the dual problem admits a minimizer. Let $v_{d}$ be the optimal value of the dual problem. We claim that there exists $L>0$ such that
      \begin{equation}\label{eqn:dual_M}\begin{aligned}
        &v_{d}=\\
        &\inf_{0\leq \alpha,\beta\leq L}  \bigg\{\alpha R_{0}^{p}  +\beta r_{0}^{q}  +  \int_{\Xi}  \max_{\xi\in\Xi}  \big[\Psi(\xi) -  \alpha d^{p}(\xi,\zeta)  - \beta \sfd_{F}^{q}(\xi,\zeta)\big] \bnu(d\zeta)\bigg\}.
        \end{aligned}
      \end{equation}
      Indeed, according to the growth-rate assumption on $\Psi$, there exists $M>0$ and $\zeta_{0}\in\Xi$ such that $\Psi(\xi)-\Psi(\zeta_{0})\leq Md^{p}(\xi,\zeta_{0})$ for all $\xi\in\Xi$, thus by choosing $\alpha=M$ and $\beta=0$, we obtain that 
      \[\begin{aligned}
        v_{d} \leq & MR_{0}^{p} + \int_{\Xi} \max_{\xi\in\Xi} [\Psi(\xi) + M(d^{p}(\xi,\zeta_{0})-d^{p}(\xi,\zeta))]\bnu(d\zeta)\\
        \leq & MR_{0}^{p} + \Psi(\xi) + M\int_{\Xi} d^{p}(\zeta,\zeta_{0})\bnu(d\zeta)<\infty.
        \end{aligned}
      \]
      Meanwhile, 
      \[\begin{aligned}
          &\alpha R_{0}^{p} + \beta r_{0}^{q} +  \int_{\Xi}  \max_{\xi\in\Xi}  \big[\Psi(\xi) -  \alpha d^{p}(\xi,\zeta)  - \beta \sfd_{F}^{q}(\xi,\zeta)\big] \bnu(d\zeta)\\
          \geq & \alpha R_{0}^{p} + \beta r_{0}^{q} + \int_\Xi \Psi(\zeta) \bnu(d\zeta),
        \end{aligned}
      \]
      which tends to infinity as $\max(\alpha,\beta)\to\infty$. Hence the claim holds. Then by Bolzano-Weierstrass theorem there exists a minimizer.

      We then establish the first-order optimality condition of problem \eqref{eqn:dual_M} at a minimizer $({\alpha_{\ast}},\beta_{\ast})$. Let $h(\alpha,\beta)$ be the objective function and set
      \[
        \Phi(\alpha,\beta,\zeta) = \Psi(\xi) - \alpha d^{p}(\xi,\zeta) - \beta \sfd_{F}^{q}(\xi,\zeta).
      \]
      For any $(\alpha,\beta)\in \mathrm{dom}\ h$ we compute the differential of $h$ and $\Phi$ with respect to $\alpha,\beta$:
      \[
        \partial h(\alpha,\beta)= (R_{0}^{p},r_{0}^{q})^{\top} - \int_\Xi \partial_{\alpha,\beta} \Phi_n(\alpha,\beta,v) \bnu(d\zeta) + \mathcal{N}(\alpha,\beta),
      \]
      where $\mathcal{N}(\alpha,\beta)$ stands for the normal cone at $(\alpha,\beta)$ to the feasible region $\R_+^{2}$, and
      \[\begin{aligned}
        & \partial_{\alpha,\beta} \Phi(\alpha,\beta,\zeta)
         = \conv \Big\{\big( d^{p}(\xi,\zeta),\sfd_{F}^{q}(\xi,\zeta) \big)^{\top}: \ \xi\in T(\zeta)\Big\},
        \end{aligned}
      \]
      where
      \[
        T(\zeta):= \argmax_{\xi\in\Xi}\big[\Psi(\xi) - {\alpha_{\ast}} d^{p}(\xi,\zeta) - \beta_{\ast} \sfd_{F}^{q}(\xi,\zeta)\big].
      \]
      The first-order optimality condition $0\in\partial h({\alpha_{\ast}},\beta_{\ast},0)$ implies that there exists $0\leq R_{\ast} \leq R_{0}$ with ${\alpha_{\ast}}(R_{0}-R_{\ast})=0$ and $0\leq r_{\ast}\leq r_{0}$ with ${\beta_{\ast}}(r_{0}-r_{\ast})=0$, such that
      \begin{equation}\label{eqn:copula_optimality}
        \begin{aligned}
        (R^{p}_{\ast}, r^{q}_{\ast}) \in \int_\Xi \conv\Big\{\big(d^{p}(\xi(\zeta),\zeta),\sfd_{F}^{q}(\xi(\zeta),\zeta)\big): \xi(\zeta) \in T(\zeta)\Big\} \bnu(d\zeta).
        \end{aligned}
      \end{equation}
      
      Finally we construct a primal optimal solution. \eqref{eqn:copula_optimality} suggests that there is a probability kernel $\{\bgamma_\zeta^\ast\}_{\zeta\in\Xi}$ such that each $\bgamma_\zeta^\ast$ is a probability distribution on $T(\zeta)$ and satisfies 
      \begin{subequations}\label{eqn:copula_{k}kt}
        \begin{align}
          \int_{\Xi^{2}} d^{p}(\xi,\zeta) \bgamma_\zeta^\ast(d\xi)\bnu(d\zeta)&=R_{\ast}^{p},\label{eqn:copula_{k}kt-Rint}\\
          R_{\ast} &\leq R_{0},\label{eqn:copula_{k}kt-R}\\
          {\alpha_{\ast}}(R_{0}-R_{\ast})&=0,\label{eqn:copula_{k}kt-Rcomp}\\
          \int_{\Xi^{2}} \sfd_{F}^{q}(\xi,\zeta)\bgamma_\zeta^\ast(d\xi)\bnu(d\zeta) &=r_{\ast}^{q},\label{eqn:copula_{k}kt-rint}\\
          r_{\ast} &\leq r_{0},\label{eqn:copula_{k}kt-r}\\
          {\beta_{\ast}}(r_{0}-r_{\ast})&=0.\label{eqn:copula_{k}kt-rcomp}
        \end{align}
      \end{subequations}
      We can verify that the probability measure $\bmu_{\ast}$ by
      \[
        \bmu_{\ast}(A):= \int_{\Xi} \bgamma_\zeta^\ast(A) \bnu(d\zeta),\ \forall A \in \mathscr{B}(\Xi).
      \]
      is a primal feasible and optimal, and thus we have prove \ref{itm:dual<infty}.

      \vspace{11pt}
      Now we prove \ref{itm:approx<infty}. Let $(\alpha_{\ast},\beta_{\ast})$ be the optimal dual solution, then for data-driven problem, the dual optimal values equals
      \begin{equation}\label{eqn:dual_ast}
        \alpha_{\ast} R_{0}^{p} + \beta_{\ast} r_{0}^{q} + \frac{1}{N} \sum_{i=1}^{N} \max_{\xi\in\Xi}  \big[\Psi(\xi) -  \alpha_{\ast} d^{p}(\xi,\hxi^{i})  - \beta_{\ast} \sfd_{F}^{q}(\xi,\hxi^{i})\big],
      \end{equation}
      Caratheodory's theorem implies that for each $\hxi^{i}$, we can choose $\bgamma_{\hxi^{i}}^\ast$ defined by \eqref{eqn:copula_{k}kt} such that its support contains at most three points. Hence, the third term in problem \eqref{eqn:dual_ast} is equivalent to
      \[\begin{aligned}
        \max_{p^{ij}\geq0,\xi^{ij}\in\Xi} \bigg\{ &\frac{1}{N} \sum_{i=1}^{N} \sum_{j=1}^3 p^{ij} \big[ \Psi(\xi^{ij}) - \alpha_{\ast} d^{p}(\xi^{ij},\hxi^{i}) - \beta_{\ast} \sfd_{F}^{q}(\xi^{ij},\hxi^{i}) \big]: \\
        & \hspace{190pt}  \sum_{j}p^{ij} = 1,\ \forall i \bigg\},
        \end{aligned}
      \]
      which is also equivalent to
      \[
        \begin{aligned}
        \max_{p^{ij}\geq0,\xi^{ij}\in\Xi} \bigg\{ &\frac{1}{N} \sum_{i=1}^{N} \sum_{j=1}^3 p^{ij} \Psi(\xi^{ij}):\ \sum_{j}p^{ij} = 1,\ \forall i,\\
        &  \frac{1}{N}\sum_{i=1}^{N} \sum_{j=1}^3 p^{ij} \big[\alpha_{\ast} d^{p}(\xi^{ij},\hxi^{i}) + \beta_{\ast} \sfd_{F}^{q}(\xi^{ij},\hxi^{i})\big] \leq \alpha_{\ast} R_{0}^{p} + \beta_{\ast} r_{0}^{q} \bigg\},
        \end{aligned}
      \]
      Let us fix $\{\xi^{ij}\}_{ij}$ to be the optimal value $\{\xi_{\ast}^{ij}\}_{ij}$ in the program above and consider finding the best $\{p^{ij}\}_{ij}$. This is continuous knapsack problem which can be solved by a greedy algorithm. More specifically, those $(i,j)$ with large value of $\Psi(\xi^{ij})$ are preferably to be set to one, until the knapsack constraints is violated, when we may set such $p^{ij}$ to be a fractional value to make the knapsack constraint tight. Hence, there exists an optimal solution $\{p^{ij}_{\ast}\}_{ij}$ with at most two fractional points. Now we modify it by making the fractional values to be zero, and denote the modified solution by $\{\tilde{p}^{ij}\}_{ij}$. Then $\{\tilde{p}^{ij}\}_{ij}$ is also feasible, and yields a feasible distribution of \eqref{eqn:data-driven<infty}
      \[
        \frac{1}{N}\sum_{i=1}^{N} \Big[\sum_{j=1}^3 \tilde{p}^{ij}\bdelta_{\xi^{ij}_{\ast}} + (1-\sum_{j=1}^3\tilde{p}^{ij})\bdelta_{\hxi^{i}}\Big].
      \]
      The growth-rate assumption on $\Psi$ implies that $\xi_{\ast}^{ij}$ are uniformly bounded by some constant $M$. Hence the objective value of this feasible distribution differ from the optimal value by at most $O(\frac{1}{N})$.
      \qed
    \end{proof}

    \begin{theorem}\label{thm:q=infty}
      Suppose $q=\infty$. 
      \begin{enumerate}[label=(\roman*)]
        \item \label{item:dual_infty} Problem \eqref{problem:copula} can be reformulated as
          \[
            %\sup_{F_{k}\in\frakF_{k},\forall k} 
            \min_{\alpha\geq0} \bigg\{ \alpha R_{0}^{p} +  \int_{\Xi} \max_{\xi\in\Xi} \big\{ \Psi(\xi)- \alpha d^{p}(\xi,\zeta):\ \sfd_{F}(\xi,\zeta)\leq r_{0} \big\} \bnu(d\zeta) \bigg\}.
          \]
        \item \label{itm:data-driven} For data-driven problem, i.e., $\bnu=\frac{1}{N}\sum_{i=1}^{N} \bdelta_{\hxi^{i}}$, the following program
          \begin{equation}\label{eqn:data-driven_infty}
            \begin{aligned}
            v_N:=\sup_{\xi^{i}\in\Xi} \bigg\{\frac{1}{N}\sum_{i=1}^{N} \Psi(\xi^{i}):\ &\frac{1}{N}\sum_{i=1}^{N} d^{p}(\xi^{i},\hxi^{i})\leq R_{0}^{p},\\ & \sfd(F(\xi^{i}),F(\hxi^{i})) \leq r_{0},\ \forall 1\leq i\leq N\bigg\}
            \end{aligned}
          \end{equation}
          is a $(1-O(\frac{1}{N}))$-approximation of the inner maximization of \eqref{problem:copula}. In particular when $\sfd(u,v)=||u,v||_{\infty}$, the second constraint in \eqref{eqn:data-driven_infty} can be written as linear constraint
          \begin{equation}\label{eqn:box}
            \begin{aligned}
              \xi^{i}_{k} \ \leq \ \hxi^{j}_{k}, \quad \forall \ i, j \textnormal{ such that } F^{-}_{k}(\hxi^{j}_{k}) - F_{k}(\hxi^{i}_{k}) \geq r_{0}, \\
				\xi^{i}_{k} \ \geq \ \hxi^{j}_{k}, \quad \forall \ i, j \textnormal{ such that } F^{-}_{k}(\hxi^{i}_{k}) - F_{k}(\hxi^{j}_{k}) \geq r_{0},
            \end{aligned}
          \end{equation}
          where $F^{-}_{k}(\hxi_{k}) := \lim_{\xi_{k} \uparrow \hxi_{k}} F_{k}(\xi_{k})$ denotes the left limit of $F_{k}$ at $\hxi_{k}$.
          If, in addition, $\Xi$ is convex and $\Psi$ is a concave function, then the approximation is exact.
      \end{enumerate}
    \end{theorem}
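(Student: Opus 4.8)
The plan is to follow the step structure of the proofs of Theorems~\ref{thm:moment} and~\ref{thm:copula}, the only structural change being that for $q=\infty$ the constraint $\sfW_{\infty}(\bmu,\bnu)\le r_{0}$ is not relaxed through a Lagrange multiplier but is carried along as a hard constraint on the transport plan: since $\sfd_{F}$ is lower semicontinuous, $\sfW_{\infty}(\bmu,\bnu)\le r_{0}$ is equivalent to the existence of a coupling $\bgamma$ of $(\bmu,\bnu)$ with $\sfd_{F}(\xi,\zeta)\le r_{0}$ holding $\bgamma$-almost surely. First I would disintegrate such a coupling as $\bgamma(d\xi,d\zeta)=\bgamma_{\zeta}(d\xi)\bnu(d\zeta)$ exactly as in Step~1 of Theorem~\ref{thm:moment}, so that $\int_{\Xi}\Psi(\xi)\bmu(d\xi)=\int_{\Xi^{2}}\Psi(\xi)\bgamma_{\zeta}(d\xi)\bnu(d\zeta)$, and rewrite $\sup_{\bmu\in\frakM_{2}}\int_{\Xi}\Psi(\xi)\bmu(d\xi)$ as a supremum over probability kernels $\{\bgamma_{\zeta}\}$ with $\int_{\Xi^{2}}d^{p}(\xi,\zeta)\bgamma_{\zeta}(d\xi)\bnu(d\zeta)\le R_{0}^{p}$ and with $\bgamma_{\zeta}$ concentrated on the closed set $\{\xi\in\Xi:\sfd_{F}(\xi,\zeta)\le r_{0}\}$ for $\bnu$-almost every $\zeta$. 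Dualizing only the scalar $W_{p}$ constraint with multiplier $\alpha\ge0$ and pushing the supremum through the $\bnu$-integral then gives the weak-duality bound
\[
\sup_{\bmu\in\frakM_{2}}\int_{\Xi}\Psi(\xi)\bmu(d\xi)\ \le\ \min_{\alpha\ge0}\Big\{\alpha R_{0}^{p}+\int_{\Xi}\max_{\xi\in\Xi}\big\{\Psi(\xi)-\alpha d^{p}(\xi,\zeta):\sfd_{F}(\xi,\zeta)\le r_{0}\big\}\bnu(d\zeta)\Big\},
\]
the measurability of the integrand following the argument of Lemma~\ref{lemma:random_lsc} adapted to account for the (closed) constraint set.

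For strong duality I would reproduce Steps~2--4 of Theorem~\ref{thm:moment}. The growth-rate condition (whose limit here is $0$) makes $\Psi(\xi)-\alpha d^{p}(\xi,\zeta)\to-\infty$ for every $\alpha>0$, so $T_{\alpha}(\zeta):=\argmax\{\Psi(\xi)-\alpha d^{p}(\xi,\zeta):\xi\in\Xi,\ \sfd_{F}(\xi,\zeta)\le r_{0}\}$ is nonempty and compact; the coercivity estimate of Step~2 of Theorem~\ref{thm:moment} confines the dual infimum to a compact interval $[0,L]$, so a minimizer $\alpha_{\ast}$ exists by Bolzano--Weierstrass. Writing $h(\alpha)$ for the dual objective and $\Phi(\alpha,\zeta)$ for the value function inside the integral, one checks $\Phi$ is a normal integrand and $\Phi(\cdot,\zeta)$ concave with set-valued derivative $\conv\{d^{p}(\xi,\zeta):\xi\in T_{\alpha}(\zeta)\}$; the generalized Moreau--Rockafellar theorem then yields $\partial h(\alpha_{\ast})$, and $0\in\partial h(\alpha_{\ast})$ produces a radius $R_{\ast}\le R_{0}$ with complementary slackness $\alpha_{\ast}(R_{0}^{p}-R_{\ast}^{p})=0$ together with a measurable selection $\zeta\mapsto\bgamma_{\zeta}^{\ast}$ of probability distributions on $T_{\alpha_{\ast}}(\zeta)$ satisfying $\int_{\Xi^{2}}d^{p}(\xi,\zeta)\bgamma_{\zeta}^{\ast}(d\xi)\bnu(d\zeta)=R_{\ast}^{p}$ (the relevant convex hull is one-dimensional, so Carath\'eodory gives at most two atoms); the degenerate case $\alpha_{\ast}=0$ is handled, as in Step~4 of Theorem~\ref{thm:moment}, by passing to the limit along $\alpha\downarrow0$. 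Finally, setting $\bmu_{\ast}(A):=\int_{\Xi}\bgamma_{\zeta}^{\ast}(A)\bnu(d\zeta)$, the coupling $\bgamma^{\ast}(d\xi,d\zeta)=\bgamma_{\zeta}^{\ast}(d\xi)\bnu(d\zeta)$ has $\sfd_{F}\le r_{0}$ almost surely (because $T_{\alpha_{\ast}}(\zeta)\subset\{\xi:\sfd_{F}(\xi,\zeta)\le r_{0}\}$) and transport cost $R_{\ast}^{p}\le R_{0}^{p}$, so $\bmu_{\ast}\in\frakM_{2}$; evaluating $\int_{\Xi}\Psi\,d\bmu_{\ast}$ using that $\bgamma_{\zeta}^{\ast}$ is supported on maximizers and that $\alpha_{\ast}(R_{0}^{p}-R_{\ast}^{p})=0$ shows it equals the dual value, which proves~\ref{item:dual_infty}.

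For~\ref{itm:data-driven}, when $\bnu=\frac{1}{N}\sum_{i=1}^{N}\bdelta_{\hxi^{i}}$ the dual value equals $\alpha_{\ast}R_{0}^{p}+\frac{1}{N}\sum_{i=1}^{N}\max\{\Psi(\xi)-\alpha_{\ast}d^{p}(\xi,\hxi^{i}):\sfd_{F}(\xi,\hxi^{i})\le r_{0}\}$, and I would argue verbatim as in the proof of Theorem~\ref{thm:copula}\ref{itm:approx<infty}: take each $\bgamma_{\hxi^{i}}^{\ast}$ with at most two atoms, rewrite the $i$-th constrained maximum as a small continuous-knapsack problem in the atom weights, note that a greedy optimal solution has at most two fractional weights in total, zero those out to obtain a plan feasible for~\eqref{eqn:data-driven_infty}, and bound the incurred loss by $O(1/N)$ via the uniform bound on the maximizers coming from the growth-rate condition; this gives the $(1-O(1/N))$-approximation and shows the worst-case distribution of~\eqref{eqn:data-driven_infty} may be taken supported on at most $N$ points. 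When $\sfd(u,v)=\|u-v\|_{\infty}$ the constraint $\sfd_{F}(\xi^{i},\hxi^{i})\le r_{0}$ splits coordinatewise into $|F_{k}(\xi^{i}_{k})-F_{k}(\hxi^{i}_{k})|\le r_{0}$, which, since each $F_{k}$ is the piecewise-constant empirical marginal distribution function, is equivalent to the order constraints~\eqref{eqn:box} --- precisely the computation already carried out in Example~\ref{eg:box}, with the left limits $F_{k}^{-}$ accounting for the $\liminf$ in the definition of $\sfd_{F}$. Finally, if $\Xi$ is convex and $\Psi$ is concave (in the case $\sfd=\|\cdot\|_{\infty}$, so that the feasible set $\{\xi:\sfd_{F}(\xi,\zeta)\le r_{0}\}$ is a product of intervals), each $T_{\alpha_{\ast}}(\zeta)$ is convex, so the two-atom selection above collapses to a single atom $\xi^{i}\in T_{\alpha_{\ast}}(\hxi^{i})$; then $\bmu_{\ast}=\frac{1}{N}\sum_{i=1}^{N}\bdelta_{\xi^{i}}$ is feasible for~\eqref{eqn:data-driven_infty} and attains the inner maximum, so the approximation is exact.

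\textbf{Main obstacle.} The delicate point is the strong-duality argument: unlike the smooth-penalty situation of Theorem~\ref{thm:copula}, here $\Phi(\cdot,\zeta)$ is the value function of a \emph{constrained} maximization over a closed and possibly unbounded set, so one must verify that $\zeta\mapsto T_{\alpha}(\zeta)$ is measurable, that $\Phi$ is a normal integrand, and that its derivative in $\alpha$ has the claimed form, before Moreau--Rockafellar applies; one must also check that the hard $\sfW_{\infty}$ constraint and the relaxed $W_{p}$ constraint do not interfere with either direction of duality, which is exactly why the primal optimum must be built as a single kernel supported on $T_{\alpha_{\ast}}(\zeta)$ --- so that one plan meets both constraints simultaneously --- rather than by gluing together separately optimal $W_{p}$- and $\sfW_{\infty}$-couplings.
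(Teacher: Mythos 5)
Your proposal is correct and follows essentially the same route as the paper: keep the $\sfW_{\infty}$ constraint as a hard support restriction $\supp\bgamma_{\zeta}\subset B_{\zeta}(r_{0}):=\{\xi:\sfd_{F}(\xi,\zeta)\le r_{0}\}$, dualize only the $W_{p}$ constraint, and reuse the existence/optimality/primal-construction machinery of Theorems~\ref{thm:moment} and~\ref{thm:copula}, then the knapsack argument, the coordinatewise box reformulation from Example~\ref{eg:box}, and the convexity argument of Remark~\ref{rmk:concave} for part~\ref{itm:data-driven}. The paper's own proof is a terse pointer to those earlier arguments; your write-up fills in the same details (including the boundary case $\alpha_{\ast}=0$, which the paper leaves implicit).
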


    \begin{proof}
      We first prove \ref{item:dual_infty}.
      For each $\zeta\in\Xi$, set $B_\zeta(r_{0}):=\{\xi\in\Xi:\ \sfd_{F}(\xi,\zeta)\leq r_{0}\}$. It follows that $B_\zeta(r_{0})$ is closed. Using the same reasoning as in the proof of Theorem \ref{thm:copula}, we can obtain the strong duality
      \[
        \begin{aligned}
          &\sup_{\bmu\in\frakM_{2}} \bigg\{\int_{\Xi} \Psi(\xi) \bmu(d\xi)\bigg\}\\
          = & \sup_{\{\bgamma_\zeta\}_\zeta\subset\cP(\Xi)} \inf_{\alpha,\beta\geq0}\bigg\{\int_{\Xi} \Psi(\xi) \bgamma_\zeta(d\xi)\bnu(d\zeta)  + \alpha R_{0}^{p} - \alpha\int_{\Xi} d^{p}(\xi,\zeta)\bgamma_\zeta(d\xi)\bnu(d\zeta):\\
          & \hspace{200pt} \supp\gamma_\zeta \subset B_\zeta(r_{0}),\ \forall \zeta\in\Xi \bigg\}\\
          = & \min_{\alpha\geq0} \bigg\{ \alpha R_{0}^{p} + \max_{\bgamma_\zeta\in\cP(B_\zeta(r_{0}))}\! \int_{\Xi}  \big[\Psi(\xi)  - \alpha d^{p}(u,v) ]\bgamma_\zeta(d\xi)\bnu(d\zeta) \bigg\}.
         \end{aligned}
      \]
      Thus we obtain \ref{item:dual_infty}.

      The first part of statement \ref{itm:data-driven} follows essentially the same as in the proof of Theorem \ref{thm:copula}\ref{itm:approx<infty}. When $\Xi$ is convex and $\sfd$ is induced from $||\cdot||_{\infty}$, the reformulation of the second constraint follows by definition of $\sfd_{F}$, and in this case, $B_\zeta(r_{0})$ is a cube and thus convex. 
      If, in addition, $\Psi$ is concave, then using the same reasoning as in Remark \ref{rmk:concave} we obtain the result.
      \qed
    \end{proof}

  \subsection{Comparison of \eqref{problem:moment} and \eqref{problem:copula}}\label{sec:compare_linear_copula}
    It may appear that formulation \eqref{problem:moment} and \eqref{problem:copula} are in different nature and should have different scopes of application, in the sense that \eqref{problem:moment} considers linear dependence, and \eqref{problem:copula} controls rank dependence as $\{F_{k}(\hxi^{i}_{k})\}$ is an ordinal scaling of the data $\{\hxi^{i}_{k}\}$.
    However, we point out this is not the case, since our new formulation controls not only ordinal association, but to some extent, also the dependence without scaling.
    This is because for relevant distribution $\bmu\in\frakM_{2}$, instead of using its own marginal distributions to make the scaling, we use $F_{k}$, the marginal distribution of the nominal distribution $\bnu$.
    Consequently, the constraint $\sfW_{q}(\bmu,\bnu)\leq r_{0}$ also controls the values, not only the ordinal relationship, of the worst-case distribution. 
    For instance, the box uncertainty set in Example \ref{eg:box} and Theorem \ref{thm:q=infty}\ref{itm:data-driven} confines the region where each data point can be perturbed.
    The following example shows that the worst-case distribution yielding from \eqref{problem:copula} has both similar linear and rank correlation to those of the nominal distribution, whereas \eqref{problem:moment} even does not control the linear correlation, since the one-sided moment constraint may be not tight.

    \begin{example}\label{eg:one-sided}
      Consider a concave objective function
      \[
        \sup_{\bmu\in\frakM_{1}} \E_{\bmu}[-\bxi_{1}^{2}-\bxi_{1}\bxi_{2}-\bxi_{2}^{2}], 
      \]
      where $\Xi=\R^{2}$ equipped with $\ell_{1}$-norm, $\bnu$ is the empirical distribution i.i.d.~drawn from a normal distribution $N\Big([1,1],\Big[\begin{array}{cc}0.3&0\\0&0.3\end{array}\Big]\Big)$, and $R_{0}=0.3$.
      A tractable reformulation based on Theorem \ref{thm:moment} is provided in the appendix.
      The empirical distribution and the worst-case distributions yielding from \eqref{problem:moment} are shown in Fig. \ref{fig:quad_linearCRSO}.
      \begin{figure}
        \centering
        \includegraphics[width=0.5\textwidth]{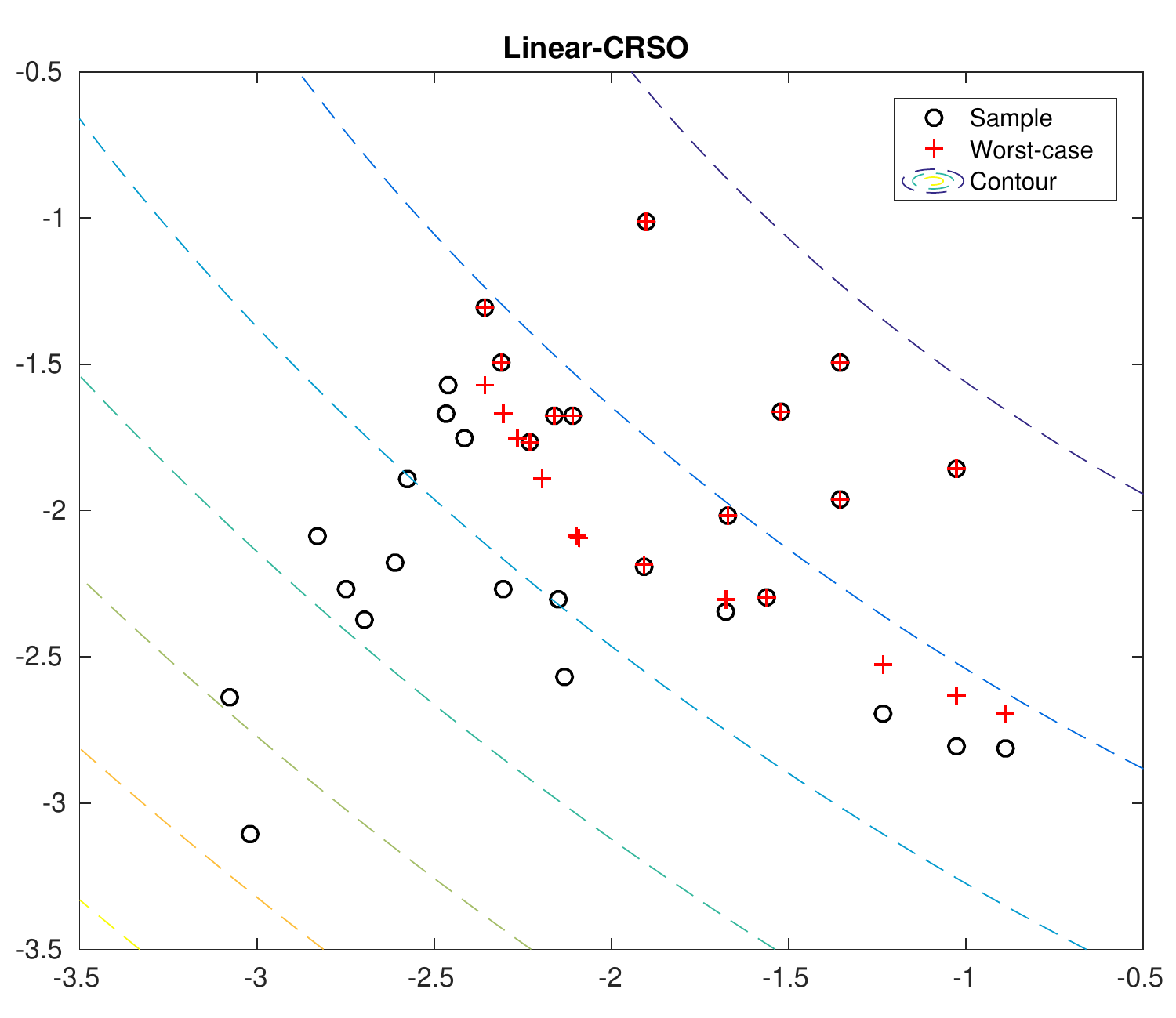}
        \caption{One-sided second-order moment constraint may be not tight}\label{fig:quad_linearCRSO}
      \end{figure}
      
      For the worst-case distribution, the moment constraint in \eqref{problem:moment} is loose, with left-hand side equal to $\Big[\begin{array}{cc}0.216&-0.056\\-0.056&0.150\end{array}\Big]$, less than the sample covariance matrix $\Big[\begin{array}{cc}0.360&-0.008\\-0.008&0.235\end{array}\Big]$.
      The correlation of the worst-case distribution is -0.491, whereas the sample correlation is -0.029. Thereby the correlation structures of the worst-case distribution differs from that of the nominal distribution a lot.
      The intuition is as follows.
      The worst-case distribution of Wasserstein uncertainty set tends to perturb the sample points with large gradient, and since the objective function is concave, the perturbation often leads to a distribution with smaller variance.
      Hence the second-order moment constraint on the main diagonal elements is much smaller than the right-hand side, which gives more flexibility on the off-diagonal elements.
      More specifically, let $\Sigma_{0} = \bigg[\begin{array}{cc}\sigma_{1}^{2} & \rho_{0}\sigma_{1}\sigma_{2} \\ \rho_{0}\sigma_{1}\sigma_{2} & \sigma_{2}^{2} \end{array}\bigg]$, and suppose the variance of the worst-case distribution is $[(1-\epsilon_{1})\sigma_{1}^{2},(1-\epsilon_{2})\sigma_{2}^{2}]$, and let $\rho$ be the correlation of the worst-case distribution. Then the covariance matrix of the worst-case distribution is written as
      $\bigg[\begin{array}{cc} (1-\epsilon)\sigma_{1}^{2} & \rho\sqrt{(1-\epsilon_{1})(1-\epsilon_{2})}\sigma_{1}\sigma_{2} \\ \sqrt{(1-\epsilon_{1})(1-\epsilon_{2})}\rho\sigma_{1}\sigma_{2} & (1-\epsilon)\sigma_{2}^{2} \end{array}\bigg]$. To satisfy the moment constraint, the correlation $\rho$ of the worst-case distribution must satisfy $|\rho-\rho_{0}| \leq \frac{\epsilon}{1-\epsilon}$. When $\epsilon$ is not so small, $\rho$ could have a large discrepancy comparing with the nominal correlation $\rho_{0}$.

      In contrast, if we use reformulation \eqref{eqn:data-driven_infty} of \ref{problem:copula} with $r_{0}=0$, the worst-case distribution is shown in Fig. \ref{fig:quad_rankCRSO}.
      It has the same rank correlation as the nominal distribution, and its linear correlation coefficient equals -0.072.
      \begin{figure}
        \centering
        \includegraphics[width=0.5\textwidth]{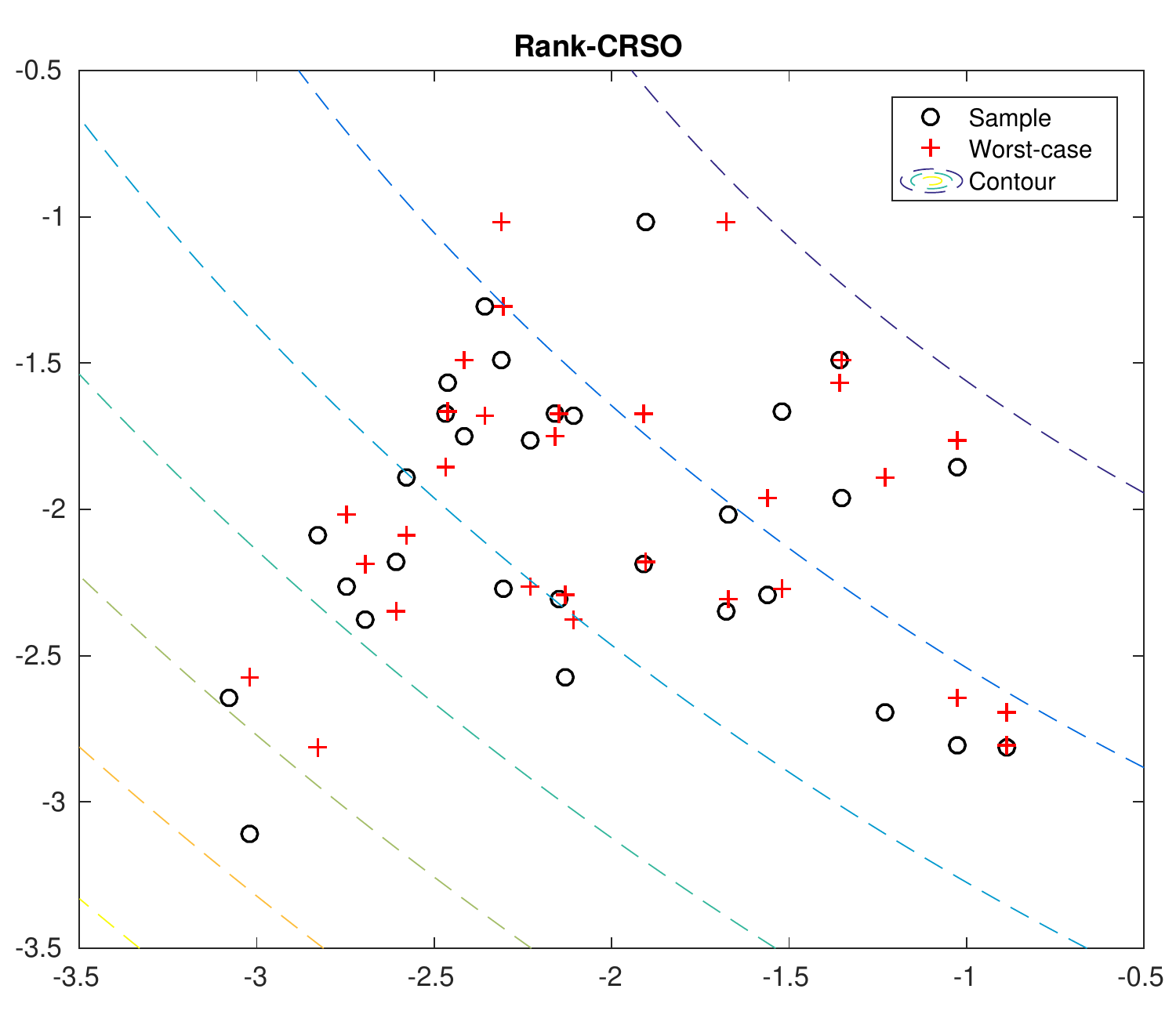}
        \caption{\eqref{problem:copula} controls both rank and linear dependence}\label{fig:quad_rankCRSO}
      \end{figure}
      \qed
    \end{example}

  \subsection{Comparison of Wasserstein distance with other distances}\label{sec:compare_wasserstein}
    In Example \ref{eg:copula}, we show that for continuous distribution, the constraint $\sfW_{q}(\bmu,\bnu)\leq r_{0}$ controls the Wasserstein distance between the copula of $\bmu$ and $\bnu$. 
    Then one may ask consider some other distances and formulate a problem similar to \eqref{problem:copula}.
    We point out that comparing to many other commonly used distances, Wasserstein metrics yields a more intuitive quantitative relationship for copulas of distribution with highly correlated components, as shown by the following example.

    \begin{example}[Distances between Gaussian copulas]
      Let $R\in[-1,1]^{K\times K}$ be a correlation matrix. The Gaussian copula with parameter $R$ is given by
      \[
        \bC_{R}(u) = \phi_{R}(\phi^{-1}(u_{1}),\ldots,\phi^{-1}(u_{d})),
      \]
      where $\phi$ is the cumulative distribution function of a standard Gaussian distribution, and $\phi_{R}$ is the cumulative distribution function of a multivariate Gaussian distribution $\mathcal{N}(0,R)$.
      Now let $\bmu_{i}$, $i=1,2,3$ be three bivariate Gaussian distributions, each of which has standard Gaussian marginal distributions, and the correlation matrices are respectively $R_{1}=\Big[\begin{array}{cc}1&0.5\\0.5&1\end{array}\Big]$, $R_{2}=\Big[\begin{array}{cc}1&0.99\\0.99&1\end{array}\Big]$, and $R_{3}=\Big[\begin{array}{cc}1&0.9999\\0.9999&1\end{array}\Big]$. Their copulas are denoted by $\bC^{\bmu_{i}}$ respectively. 
      Various distances between $\bC^{\bmu_{1}}$ and $\bC^{\bmu_{2}}$ and between $\bC^{\bmu_{2}}$ and $\bC^{\bmu_{3}}$ are shown in Table \ref{tab:copula_distance}, in which total variation distance and $W_{2}$-Wasserstein metric are computed numerically, while the others use closed-form formulas (cf. \cite{marti2016optimal}).

      \begin{table}
      \caption{Distances between bivariate Gaussian copulas}
      \label{tab:copula_distance}
        \begin{tabular}{c|ccccccc}
          \toprule
          Distances & Fisher-Rao & KL & Burg & Hellinger & Bhattacharya & TV & $W_{2}$\\  \midrule
          $\bC^{\bmu_{1}}$, $\bC^{\bmu_{2}}$ & 2.77 & 22.56 & 1.48 & 0.69 & 0.65 & 2.45 & 0.15\\
          $\bC^{\bmu_{2}}$, $\bC^{\bmu_{3}}$ & 3.26 & 47.20 & 1.81 & 0.75 & 0.81 & 4.42 & 0.03\\\bottomrule
        \end{tabular}
      \end{table}
      
      Intuitively, distance between $\bmu_{2}$ and $\bmu_{3}$ should be smaller since both $\bmu_{2}$ and $\bmu_{3}$ are close to a comonotone relationship. Among these distances above, only Wasserstein metric is consistent with our intuition. Therefore, this example renders another justification of the usefulness of our formulation \eqref{problem:copula}.
      \qed
    \end{example}

\section{Concluding remarks}
  In this paper, motivated from the drawback of DRSO with moment uncertainty set, we proposed two new formulations, \ref{problem:moment} and \ref{problem:copula}, that capture different dependence structure for DRSO problems, and derive their dual reformulation. In particular, the dual reformulation is tractable for data-driven \ref{problem:moment} and \ref{problem:copula} with $\infty$-Wasserstein distance. 
  An interesting future direction is to find efficient algorithms for \ref{problem:copula} with $q$-Wasserstein distance ($q\in[1,\infty)$).
  Moreover, various examples and numerical results demonstrate the flexibility and usefulness of our new formulations.

\appendix
\section{Appendix}  
  \subsection{Dual reformulation in Example \ref{eg:one-sided}}
    Let $A$ be a positive semidefinite matrix. Then $\sup_{\bmu\in\frakM_{1}} \E_{\bmu}[-\bxi^{\top} A\bxi]$ admits a strong dual reformulation
    \[
      \begin{aligned}
      \min_{\substack{x\in X,y_{i}\in\R\\\lambda\geq0,\Lambda\succeq 0}}\quad & \lambda R_{0}^{2} + \langle \Lambda,\Sigma_{0} \rangle + \frac{1}{N}\sum_{i=1}^{N} y_{i}\\
      s.t. \quad & \left[\begin{array}{cc} \Lambda+ A & \frac{1}{2}\zeta - \Lambda m_{0} \\ (\frac{1}{2}\zeta - \Lambda m_{0})^{\top} &\  -\zeta^{\top}\hxi^{i} + m_{0}^{\top}\Lambda m_{0}+y_{i} \end{array}\right]\succeq0,\\
      &\hspace{90pt} \forall \ 1\leq i\leq N,1\leq j\leq J.
    \end{aligned}
    \]
    \begin{proof}
      The result is a consequence of Theorem \ref{thm:moment}.
      Observe that $||\xi-\hxi^{i}||=\sup_{||\zeta||_{\ast}\leq1} \zeta^{\top}(\xi-\hxi^{i})$, then by convex programming duality, we have that
      \[\begin{aligned}
        &\max_{\xi\in\Xi} \Big\{-\xi^{\top} A\xi - \lambda||\xi-\hxi^{i}|| - (\xi-m_{0})^{\top}\Lambda(\xi-m_{0})\Big\}\\
        =& \max_{\xi\in\Xi} \inf_{||\zeta||_{\ast}\leq 1} \Big\{-\xi^{\top} A\xi - \lambda\zeta^{\top}(\xi-\hxi^{i}) - (\xi-m_{0})^{\top}\Lambda(\xi-m_{0})\}\\
        =& \inf_{||\zeta||_{\ast}\leq 1}\max_{\xi\in\Xi} \Big\{-\xi^{\top} A\xi - \lambda\zeta^{\top}(\xi-\hxi^{i}) - (\xi-m_{0})^{\top}\Lambda(\xi-m_{0})\}.
        \end{aligned}
      \]
      Hence the constraint
      \[
        y_{i} \geq \max_{\xi\in\Xi} \Big\{-\xi^{\top} A\xi - \lambda||\xi-\hxi^{i}|| - (\xi-m_{0})^{\top}\Lambda(\xi-m_{0})\Big\}
      \]
      can be written as
      \[
        \exists ||\zeta||_{\ast}\leq 1, \ s.t.\ y_{i} \geq \max_{\xi\in\Xi} \Big\{-\xi^{\top} A\xi - \lambda\zeta^{\top}(\xi-\hxi^{i}) - (\xi-m_{0})^{\top}\Lambda(\xi-m_{0})\Big\},
      \]
      which is further equivalent to
      \[
        \exists ||\zeta||_{\ast} \leq 1, \ s.t.
        \left[\begin{array}{cc} \Lambda+A & \frac{1}{2}\lambda\zeta - \Lambda m_{0} \\ (\frac{1}{2}\lambda\zeta - \Lambda m_{0})^{\top} & -\lambda\zeta^{\top}\hxi^{i} + m_{0}^{\top}\Lambda m_{0}+y_{i} \end{array}\right]\!\succeq\!0.
      \]
      Replacing $\lambda\zeta$ by $\zeta$ we obtain the result. \qed
    \end{proof}

%\begin{acknowledgements}
%If you'd like to thank anyone, place your comments here
%and remove the percent signs.
%\end{acknowledgements}

% BibTeX users please use one of
%\bibliographystyle{spbasic}      % basic style, author-year citations
\bibliographystyle{spmpsci}      % mathematics and physical sciences
\bibliography{ref_dependence}   % name your BibTeX data base

\end{document}